\newtheorem{theorem}{Theorem}
\newtheorem{lemma}[theorem]{Lemma}
\newtheorem{proposition}[theorem]{Proposition}
\DeclareMathOperator*{\divergenz}{div}              %
\DeclareMathOperator*{\Ss}{S}
\newcommand{\N}{\mathbb{N}}
\newcommand{\R}{\mathbb{R}}
\newcommand{\Lp}[1]{L^{#1}(\Omega)}
\newcommand{\Wp}[1]{W^{1,#1}(\Omega)}
\newcommand{\Wpzero}[1]{W^{1,#1}_0(\Omega)}
\newcommand{\lan}{\langle}
\newcommand{\ran}{\rangle}
\newcommand{\eps}{\varepsilon}
\newcommand{\ph}{\varphi}
\newcommand{\into}{\int_{\Omega}}
\newcommand{\weak}{\rightharpoonup}
\newcommand{\Linf}{L^{\infty}(\Omega)}
\newcommand{\close}{\overline{\Omega}}
\renewcommand{\l}{\left}
\renewcommand{\r}{\right}
\newcommand{\WH}{W^{1, \mathcal{H}}_0(\Omega)}
\numberwithin{theorem}{section}
\numberwithin{equation}{section}
\newcommand{\nc}{\mathcal{N}}
\newcommand{\e}{\mathbb \varepsilon}
\def\le{\leqslant}
\def\phi{\varphi}
\def\ykh#1{\left(#1\right)}
\def\eh{\mathcal{H}}
\title[Existence of solutions for singular double phase problems]{Existence of solutions for singular double phase problems via the Nehari manifold method}
\author[W.\,Liu]{Wulong Liu}
\address[W.\,Liu]{School of Science, Jiangxi University of Science and Technology, Ganzhou, Jiangxi 341000, PR China}
\email{liuwul000@gmail.com}
\author[G.\,Dai]{Guowei Dai}
\address[G.\,Dai]{School of Mathematical Sciences, Dalian University of Technology, Dalian, 116024, PR China}
\email{daiguowei@dlut.edu.cn}
\author[N.\,S.\,Papageorgiou]{Nikolaos S.\,Papageorgiou}
\address[N.\,S.\,Papageorgiou]{National Technical University, Department of Mathematics, Zografou Campus, Athens 15780, Greece}
\email{npapg@math.ntua.gr}
\author[P.\,Winkert]{Patrick Winkert}
\address[P.\,Winkert]{Technische Universit\"{a}t Berlin, Institut f\"{u}r Mathematik, Stra\ss e des 17.\,Juni 136, 10623 Berlin, Germany}
\email{winkert@math.tu-berlin.de}
\subjclass{35J15, 35J62, 35J92, 35P30}
\keywords{Double phase operator, fibering method, multiple solutions, Nehari manifold, singular problems}
\begin{document}

\begin{abstract}
	In this paper we study quasilinear elliptic equations driven by the double phase operator and a right-hand side which has the combined effect of a singular and of a parametric term. Based on the fibering method by using the Nehari manifold we are going to prove the existence of at least two weak solutions for such problems when the parameter is sufficiently small.
\end{abstract}

\maketitle

\section{Introduction}\label{section_1}

Zhikov \cite{Zhikov-1986} was the first who introduced and studied functionals whose integrands change their ellipticity according to a point in order to provide models for strongly anisotropic materials. As a prototype he considered the functional
\begin{align}\label{integral_minimizer}
	\omega \mapsto \int_\Omega \big(|\nabla  \omega|^p+\mu(x)|\nabla  \omega|^q\big)\,\mathrm{d} x,
\end{align}
where $1<p<q$ and with a nonnegative weight function $\mu\in\Linf$. Therefore, the integrand of \eqref{integral_minimizer} has unbalanced growth. The main feature of the functional defined in \eqref{integral_minimizer} is the change of ellipticity on the set where the weight function is zero, that is, on the set $\{x\in \Omega: \mu(x)=0\}$. In other words, the energy density of \eqref{integral_minimizer} exhibits ellipticity in the gradient of order $q$ on the points $x$ where $\mu(x)$ is positive and of order $p$ on the points $x$ where $\mu(x)$ vanishes. We also refer to the book of Zhikov-Kozlov-Ole\u{\i}nik \cite{Zhikov-Kozlov-Oleinik-1994}. Functionals of type \eqref{integral_minimizer} have been intensively studied in the past decade concerning regularity for isotropic and anisotropic settings. We mention the papers of Baroni-Colombo-Mingione \cite{Baroni-Colombo-Mingione-2015,Baroni-Colombo-Mingione-2016,Baroni-Colombo-Mingione-2018}, Baroni-Kuusi-Mingione \cite{Baroni-Kuusi-Mingione-2015}, Byun-Oh \cite{Byun-Oh-2017,Byun-Oh-2020}, Byun-Youn \cite{Byun-Youn-2018}, Colombo-Mingione \cite{Colombo-Mingione-2015a,Colombo-Mingione-2016, Colombo-Mingione-2015b}, De Filippis-Mingione \cite{De-Filippis-Mingione-2020,De-Filippis-Mingione-2021, De-Filippis-Mingione-2020b}, De Filippis-Palatucci \cite{De-Filippis-Palatucci-2019}, Esposito-Leonetti-Mingione \cite{Esposito-Leonetti-Mingione-2004}, Esposito-Leonetti-Petricca \cite{Esposito-Leonetti-Petricca-2019}, 
Marcellini \cite{Marcellini-1991,Marcellini-1989b, Marcellini-1989}, Ok \cite{Ok-2018,Ok-2020}, Ragusa-Tachikawa \cite{Ragusa-Tachikawa-2020}, Riey \cite{Riey-2019} and the references therein.

The energy functional \eqref{integral_minimizer} is related to the so-called double phase operator which is defined by
\begin{align}\label{operator_double_phase}
	\divergenz \big(|\nabla u|^{p-2} \nabla u+ \mu(x) |\nabla u|^{q-2} \nabla u\big)\quad \text{for }u\in \WH
\end{align}
with an appropriate Musielak-Orlicz Sobolev space $\WH$, see its definition in Section \ref{section_2}. It is easy to see that \eqref{operator_double_phase} reduces to the $p$-Laplacian if $\mu\equiv 0$ or to the weighted $(q,p)$-Laplacian if $\inf_{\close} \mu\geq \mu_0>0$, respectively.

Given a bounded domain $\Omega\subset \R^N $, $N\geq 2$, with Lipschitz boundary $\partial \Omega$, we study the following singular double phase problem
\begin{equation}\label{problem}
	\begin{aligned}
		-\divergenz\big(|\nabla u|^{p-2} \nabla u+ \mu(x) |\nabla u|^{q-2} \nabla u\big)&= a(x)u^{-\gamma}+\lambda u^{r-1}\quad && \text{in } \Omega, \\
		u&= 0 && \text{on } \partial\Omega,
	\end{aligned}
\end{equation}
where we suppose the subsequent assumptions:
\begin{enumerate}
	\item[\textnormal{(H):}]
	\begin{enumerate}[itemsep=0.2cm]
		\item[\textnormal{(i)}]
			$1<p<N$, $p<q<p^*=\frac{Np}{N-p}$ and $0 \leq \mu(\cdot)\in L^\infty(\Omega)$;
		\item[\textnormal{(ii)}]
		$0<\gamma<1$ and $q<r<p^*$;
		\item[\textnormal{(iii)}]
		$a \in \Linf$ and $a(x) > 0$ for a.\,a.\,$x\in\Omega$.
	\end{enumerate}
\end{enumerate}

A function $u \in\WH$ is said to be a weak solution if $a(\cdot)u^{-\gamma} h \in \Lp{1}$, $u> 0$ for a.\,a.\,$x\in\Omega$ and 
\begin{equation}\label{weak_solution}
	\begin{split}
		& \into \big(|\nabla u|^{p-2} \nabla u+ \mu(x) |\nabla u|^{q-2} \nabla u\big) \cdot \nabla h \,\mathrm{d} x\\
		&= \into a(x)u^{-\gamma} h \,\mathrm{d} x+\lambda \into u^{r-1}h\,\mathrm{d} x
	\end{split}
\end{equation}
is satisfied for all $h \in \WH$. Due to \textnormal{(H)(ii)} we see that $\into u^{r-1}h\,\,\mathrm{d} x$ is finite since $r<p^*$. So, the definition of a weak solution in \eqref{weak_solution} is well-defined. The corresponding energy functional  $\ph_\lambda\colon\WH\to \R$  for problem \eqref{problem} is given by
\begin{align*}
		\ph_{\lambda}(u)=\frac{1}{p} \|\nabla u\|_p^p+\frac{1}{q}\|\nabla u\|_{q,\mu}^q-\frac{1}{1-\gamma}\into a(x)|u|^{1-\gamma}\,\mathrm{d} x-\frac{\lambda}{r}\|u\|_r^r.
\end{align*}

The main result in this paper is the following theorem.

\begin{theorem}\label{main_result}
	Let hypotheses \textnormal{(H)} be satisfied. Then there exists $\hat{\lambda}_0^*>0$ such that for all $\lambda \in (0,\hat{\lambda}_0^*]$ problem \eqref{problem} has at least two weak solutions $u^*, v^* \in \WH$ such that $\ph_\lambda(u^*)<0<\ph_\lambda(v^*)$.
\end{theorem}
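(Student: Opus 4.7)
The plan is to apply the Nehari manifold method together with the fibering map technique, which is the standard framework for singular variational problems of this type. For each $u\in\WH\setminus\{0\}$ and $t>0$ I introduce the fibering function $\psi_u(t)\ldef\ph_\lambda(tu)$. Since $1-\gamma<p\le q<r$, one has $\psi_u(t)\to 0^-$ as $t\to 0^+$ (the singular $t^{1-\gamma}$ term dominates the polynomial ones) and $\psi_u(t)\to-\infty$ as $t\to\infty$ (the superlinear parametric $t^r$ term dominates). A direct analysis of $t^{\gamma}\psi_u'(t)$ as a function of $t$ shows that there exists $\hat\lambda_0^*>0$, depending only on $p,q,r,\gamma$ and on the embedding constants of $\WH\qr\Lp{r}$, such that for every $\lambda\in(0,\hat\lambda_0^*]$ and every $u\neq 0$ the equation $\psi_u'(t)=0$ has exactly two positive roots $t_+(u)<t_-(u)$, the first being a local minimum with $\psi_u(t_+(u))<0$ and the second a local maximum with $\psi_u(t_-(u))>0$. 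This yields the Nehari manifold decomposition $\nc_\lambda=\nc_\lambda^+\cup\nc_\lambda^-$ together with $\nc_\lambda^0=\emptyset$, which will be essential later.

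Next I would minimize $\ph_\lambda$ separately on $\nc_\lambda^+$ and on $\nc_\lambda^-$. Coercivity of $\ph_\lambda$ on $\WH$ follows from the norm estimates in the Musielak--Orlicz--Sobolev space, the embedding $\WH\qr\Lp{r}$ with $r<p^*$, and the fact that $1-\gamma<1<p$ together with $a\in\Linf$. A minimizing sequence $(u_n)\subset\nc_\lambda^+$ is therefore bounded; along a subsequence $u_n\weak u^*$ in $\WH$ with strong convergence in $\Lp{s}$ for $s\in[1,p^*)$, the weak lower semicontinuity of the gradient terms and dominated convergence applied to the singular integral (uniform integrability of $a(x)|u_n|^{1-\gamma}$ following from Vitali's theorem and the $L^s$ convergence) yield $\ph_\lambda(u^*)\le\inf_{\nc_\lambda^+}\ph_\lambda<0$. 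Since $\nc_\lambda^0=\emptyset$, $u^*\not\equiv 0$, and the fibering dichotomy forces $u^*\in\nc_\lambda^+$. An analogous argument on $\nc_\lambda^-$ produces $v^*$ with $\ph_\lambda(v^*)=\inf_{\nc_\lambda^-}\ph_\lambda>0$; here one exploits $r>q$ to obtain a uniform positive lower bound on $t_-(u)$ for normalized $u$, which in turn guarantees that the infimum is strictly positive.

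The main obstacle is to verify that $u^*$ and $v^*$ satisfy the weak formulation \eqref{weak_solution}, because $\ph_\lambda$ fails to be Fr\'echet differentiable due to the singular term. I would circumvent this by a perturbation argument of Sun--Wu--Long type: for a test function $h\in\WH$ and small $\eps>0$, the implicit function theorem applied to $t\mapsto\psi_{u^*+\eps h}'(t)$ at $t=1$ (nondegenerate since $\nc_\lambda^0=\emptyset$) yields a continuous curve $t(\eps)\to 1$ with $t(\eps)(u^*+\eps h)\in\nc_\lambda^+$; minimality of $u^*$, division by $\eps$, and Fatou's lemma applied to the singular part give a one-sided variational inequality. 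Replacing $u^*$ by $|u^*|$ (which does not increase $\ph_\lambda$) allows me to assume $u^*\ge 0$, and then the strictly positive contribution $a(x)(u^*)^{-\gamma}$, combined with a Moser iteration and the strong maximum principle for the double phase operator, forces $u^*>0$ a.e.\ in $\Om$, so that the singular integral is finite. A density argument, first with $h\ge 0$ and then for general $h$ via $h=h^+-h^-$, upgrades the inequality to the equality \eqref{weak_solution}. The same scheme applied on $\nc_\lambda^-$ yields the second weak solution $v^*$, and by construction $\ph_\lambda(u^*)<0<\ph_\lambda(v^*)$.
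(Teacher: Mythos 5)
Your overall strategy (fibering map, Nehari decomposition $\mathcal N_\lambda=\mathcal N_\lambda^+\cup\mathcal N_\lambda^0\cup\mathcal N_\lambda^-$, emptiness of $\mathcal N_\lambda^0$ for small $\lambda$, minimize on $\mathcal N_\lambda^\pm$, then use an implicit-function/perturbation step to pass from constrained minimizer to weak solution) is exactly the paper's route. However, several of the concrete steps as you wrote them would not go through, and they are precisely the places where the paper has to work.

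First, the passage from the one-sided inequality for nonnegative test functions to the equality \eqref{weak_solution} \emph{cannot} be done by writing $h=h^+-h^-$. From Fatou's lemma you only get
\[
\into \big(|\nabla u^*|^{p-2}\nabla u^*+\mu(x)|\nabla u^*|^{q-2}\nabla u^*\big)\cdot\nabla h\,\mathrm d x \;\geq\; \into a(x)(u^*)^{-\gamma}h\,\mathrm d x+\lambda\into (u^*)^{r-1}h\,\mathrm d x
\]
for all $h\geq 0$. Plugging in $h^+$ and $h^-$ separately gives two inequalities pointing the same way, and subtracting them yields nothing. The standard device (used in the paper, after Sun--Wu--Long) is to test with $h=(u^*+\eps v)_+$ for arbitrary $v\in\WH$, exploit $u^*\in\mathcal N_\lambda$, split $\Omega$ along $\{u^*+\eps v<0\}$, divide by $\eps$ and let $\eps\to 0^+$. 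This produces the reverse inequality for arbitrary $v$ and hence equality. Without this trick your proof is genuinely incomplete at the central step.

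Second, your argument that $u^*>0$ a.e.\ via ``Moser iteration and the strong maximum principle for the double phase operator'' presupposes a Harnack/strong maximum principle for the double phase operator with a singular right-hand side, which is not available off the shelf and would require additional regularity hypotheses. The paper avoids this entirely with an elementary contradiction argument: if $u^*=0$ on a set $D$ of positive measure, take $h>0$, and then the singular piece contributes a term $-\frac{1}{(1-\gamma)t^\gamma}\int_D a(x)h^{1-\gamma}\,\mathrm d x$ to the difference quotient $\frac{\varphi_\lambda(u^*+th)-\varphi_\lambda(u^*)}{t}$, which blows up to $-\infty$ as $t\to 0^+$ and contradicts the local-minimizer property of $u^*$ established from $\vartheta(t)(u^*+th)\in\mathcal N_\lambda^+$. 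You should use that argument.

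Third, to conclude that the weak limit $u^*$ of a minimizing sequence actually lies in $\mathcal N_\lambda^+$ (so that the infimum is attained), weak lower semicontinuity is not enough — it only gives $\varphi_\lambda(u^*)\leq m_\lambda^+$ without guaranteeing $u^*\in\mathcal N_\lambda$. The paper proves that $\liminf_n\rho_{\mathcal H}(u_n)=\rho_{\mathcal H}(u^*)$ (by a contradiction argument using the second zero $t_1$ of the auxiliary function and the decomposition of the fibering map), then Proposition \ref{proposition_modular_properties}(v) upgrades this to strong convergence in $\WH$; only then can one pass to the limit in the defining inequality for $\mathcal N_\lambda^+$ and invoke $\mathcal N_\lambda^0=\emptyset$ to rule out the boundary case. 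Finally, a minor but real slip: $\varphi_\lambda$ is \emph{not} coercive on all of $\WH$ (since $r>q$ forces $\varphi_\lambda(tu)\to-\infty$); coercivity holds only after restriction to $\mathcal N_\lambda$, where the constraint lets you trade the $-\lambda\|u\|_r^r$ term for lower-order quantities.
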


The main characteristic in our treatment is the usage of the so-called Nehari manifold which turned into a very powerful tool in order to find solutions for differential equations via critical point theory. This method was first introduced by Nehari \cite{Nehari-1961, Nehari-1960} and the idea behind is the following: For a real reflexive Banach space $X$ and a functional $\Psi \in C^1(X,\R)$, we see that a critical point $u \neq 0$ of $\Psi$ belongs to the set
\begin{align*}
	\mathcal{N} =\Big\{u \in X\setminus\{0\} \,:\, \lan \Psi'(u),u\ran=0 \Big\},
\end{align*}
where $\lan\cdot,\cdot\ran$ is the duality paring between $X$ and its dual space $X^*$. Therefore, $\mathcal{N}$ is a natural constraint for finding nontrivial critical points of $\Psi$. We mention the book chapter of Szulkin-Weth \cite{Szulkin-Weth-2010} in order to have a very well description of the method.

Because of the appearance of the singular term in \eqref{problem}, it is clear that the corresponding energy functional for problem \eqref{problem} is not $C^1$ and so we need to make several modifications in order to use the Nehari manifold method. With our work we extend the recent papers of Papageorgiou-Repov\v{s}-Vetro \cite{Papageorgiou-Repovs-Vetro-2021} for the weighted $(p,q)$-Laplacian and Papageorgiou-Winkert \cite{Papageorgiou-Winkert-2021} for the $p$-Laplacian. In contrast to these works we are working in Musielak-Orlicz Sobolev spaces and not in usual Sobolev spaces.

To the best of our knowledge, there are only two works dealing with singular double phase problems. Chen-Ge-Wen-Cao \cite{Chen-Ge-Wen-Cao-2020} considered problems of type \eqref{problem} and proved the existence of a weak solution with negative energy. Very recently, Farkas-Winkert \cite{Farkas-Winkert-2020} studied singular Finsler double phase problems of the form
\begin{equation*}
		-\divergenz\left(F^{p-1}(\nabla u)\nabla F(\nabla u) +\mu(x) F^{q-1}(\nabla u)\nabla F(\nabla u)\right) = u^{p^{*}-1}+\lambda \l(u^{\gamma-1}+g(u)\r)
\end{equation*}
in $\Omega$ and $u=0$ on $\partial\Omega$, where $(\R^N,F)$ is a Minkowski space. Based on variational tools, the existence of one weak solution is shown. Both works show only the existence of one weak solution (in contrast to our work) and the treatments are completely different from ours.

Finally, existence results for double phase problems with homogeneous Dirichlet or nonlinear Neumann boundary conditions without singular term can be found in the papers of Colasuonno-Squassina \cite{Colasuonno-Squassina-2016}, El Manouni-Marino-Winkert \cite{El-Manouni-Marino-Winkert-2020}, Gasi\'nski-Papa\-georgiou \cite{Gasinski-Papageorgiou-2019}, Gasi\'nski-Winkert \cite{Gasinski-Winkert-2020a,Gasinski-Winkert-2020b,Gasinski-Winkert-2021}, Liu-Dai \cite{Liu-Dai-2018,Liu-Dai-2020,Liu-Dai-2018b}, Marino-Winkert \cite{Marino-Winkert-2020}, Papageorgiou-R\u{a}dulescu-Repov\v{s} \cite{Papageorgiou-Radulescu-Repovs-2020d}, Papageorgiou-Vetro-Vetro \cite{Papageorgiou-Vetro-Vetro-2020}, Perera-Squassina \cite{Perera-Squassina-2019}, Zeng-Bai-Gasi\'nski-Winkert \cite{Zeng-Bai-Gasinski-Winkert-2020, Zeng-Gasinski-Winkert-Bai-2020} and the references there\-in. For related works dealing with certain types of double phase problems we refer to the works of Alves-Santos-Silva \cite{Alves-Santos-Silva-2021}, Bahrouni-R\u{a}dulescu-Winkert \cite{Bahrouni-Radulescu-Winkert-2020}, Barletta-Tornatore \cite{Barletta-Tornatore-2021}, Biagi-Esposito-Vecchi \cite{Biagi-Esposito-Vecchi-2021}, Lei \cite{Lei-2018}, Papageorgiou-R\u{a}dulescu-Repov\v{s} \cite{Papageorgiou-Radulescu-Repovs-2019b}, R\u{a}dulescu \cite{Radulescu-2019}, Sun-Wu-Long \cite{Sun-Wu-Long-2001}, Wang-Zhao-Zhao \cite{Wang-Zhao-Zhao-2013} and Zeng-Bai-Gasi\'nski-Winkert \cite{Zeng-Bai-Gasinski-Winkert-2020b}.

\section{Preliminaries }\label{section_2}

In this section we recall the main properties on the theory of Musielak-Orlicz spaces $\Lp{\mathcal{H}}$ and $\WH$, respectively. We refer to Colasuonno-Squassina \cite{Colasuonno-Squassina-2016}, Harjulehto-H\"{a}st\"{o} \cite{Harjulehto-Hasto-2019} and Musielak \cite{Musielak-1983} for the main results in this direction.

We denote by $\Lp{r}$ and $L^r(\Omega;\R^N)$ the usual Lebesgue spaces equipped with the norm $\|\cdot\|_r$ for every $1\leq r<\infty$.  For $1<r<\infty$, $\Wp{r}$ and $\Wpzero{r}$ stand for the Sobolev spaces endowed with the norms $\|\cdot \|_{1,r}$ and $\|\cdot\|_{1,r,0}=\|\nabla \cdot\|_r$, respectively.

Let $\mathcal{H}\colon \Omega \times [0,\infty)\to [0,\infty)$ be the function defined by
\begin{align*}
	\mathcal H(x,t)= t^p+\mu(x)t^q.
\end{align*}
Then, the Musielak-Orlicz space $L^\mathcal{H}(\Omega)$ is defined by
\begin{align*}
	L^\mathcal{H}(\Omega)=\left \{u ~ \Big | ~ u\colon \Omega \to \R \text{ is measurable and } \rho_{\mathcal{H}}(u)<+\infty \right \}
\end{align*}
equipped with the Luxemburg norm
\begin{align*}
	\|u\|_{\mathcal{H}} = \inf \left \{ \tau >0 : \rho_{\mathcal{H}}\left(\frac{u}{\tau}\right) \leq 1  \right \},
\end{align*}
where the modular function $\rho_{\mathcal{H}}\colon\Lp{\mathcal{H}}\to \R$ is given by
\begin{align}\label{modular}
	\rho_{\mathcal{H}}(u):=\into \mathcal{H}(x,|u|)\,\mathrm{d} x=\into \big(|u|^{p}+\mu(x)|u|^q\big)\,\mathrm{d} x.
\end{align}

From Colasuonno-Squassina \cite[Proposition 2.14]{Colasuonno-Squassina-2016} we know that the space $L^\mathcal{H}(\Omega)$ is a reflexive Banach space. Moreover, we define the seminormed space
\begin{align*}
	L^q_\mu(\Omega)=\left \{u ~ \Big | ~ u\colon \Omega \to \R \text{ is measurable and } \into \mu(x) |u|^q \,\mathrm{d} x< +\infty \right \},
\end{align*}
which is endowed with the seminorm
\begin{align*}
	\|u\|_{q,\mu} = \left(\into \mu(x) |u|^q \,\mathrm{d} x \right)^{\frac{1}{q}}.
\end{align*}
In the same way we define $L^q_\mu(\Omega;\R^N)$. 

The Musielak-Orlicz Sobolev space $W^{1,\mathcal{H}}(\Omega)$ is defined by
\begin{align*}
	W^{1,\mathcal{H}}(\Omega)= \left \{u \in L^\mathcal{H}(\Omega) \,:\, |\nabla u| \in L^{\mathcal{H}}(\Omega) \right\}
\end{align*}
equipped with the norm
\begin{align*}
	\|u\|_{1,\mathcal{H}}= \|\nabla u \|_{\mathcal{H}}+\|u\|_{\mathcal{H}},
\end{align*}
where $\|\nabla u\|_\mathcal{H}=\|\,|\nabla u|\,\|_{\mathcal{H}}$. The completion of $C^\infty_0(\Omega)$ in $W^{1,\mathcal{H}}(\Omega)$ is denoted by $W^{1,\mathcal{H}}_0(\Omega)$ and from \textnormal{(H)(i)} we have an equivalent norm on $W^{1,\mathcal{H}}_0(\Omega)$ given by
\begin{align*}
	\|u\|_{1,\mathcal{H},0}=\|\nabla u\|_{\mathcal{H}},
\end{align*}
see Proposition 2.18(ii) of Crespo-Blanco-Gasi\'nski-Harjulehto-Winkert \cite{Crespo-Blanco-Gasinski-Harjulehto-Winkert}. We know that $W^{1,\mathcal{H}}(\Omega)$ and $\WH$ are reflexive Banach spaces.

We have the following embedding results for the spaces $\Lp{\mathcal{H}}$ and $\Wpzero{\mathcal{H}}$.

\begin{proposition}\label{proposition_embeddings}
	Let \textnormal{(H)(i)} be satisfied. Then the following embeddings hold:
	\begin{enumerate}
		\item[\textnormal{(i)}]
		$\Lp{\mathcal{H}} \hookrightarrow \Lp{r}$ and $\WH\hookrightarrow \Wpzero{r}$ are continuous for all $r\in [1,p]$;
		\item[\textnormal{(ii)}]
		$\WH \hookrightarrow \Lp{r}$ is continuous for all $r \in [1,p^*]$;
		\item[\textnormal{(iii)}]
		$\WH \hookrightarrow \Lp{r}$ is compact for all $r \in [1,p^*)$;
		\item[\textnormal{(iv)}]
		$\Lp{\mathcal{H}} \hookrightarrow L^q_\mu(\Omega)$ is continuous;
		\item[\textnormal{(v)}]
		$\Lp{q} \hookrightarrow \Lp{\mathcal{H}}$ is continuous.
	\end{enumerate}
\end{proposition}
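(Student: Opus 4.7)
The plan is to reduce everything to the standard Lebesgue and Sobolev embeddings on the bounded domain $\Omega$, once the embedding $L^{\mathcal{H}}(\Omega)\hookrightarrow L^p(\Omega)$ is established. Throughout the proof I will use the standard norm--modular equivalence for $\mathcal{H}(x,t)=t^p+\mu(x)t^q$: if $\|u\|_{\mathcal{H}}\le 1$ then $\|u\|_{\mathcal{H}}^q\le\rho_{\mathcal{H}}(u)\le\|u\|_{\mathcal{H}}^p$, and if $\|u\|_{\mathcal{H}}\ge 1$ then $\|u\|_{\mathcal{H}}^p\le\rho_{\mathcal{H}}(u)\le\|u\|_{\mathcal{H}}^q$; this is a routine consequence of the definition of the Luxemburg norm and the fact that $\mathcal{H}(x,\cdot)$ is homogeneous in $t$ between degrees $p$ and $q$.

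First I would handle item (v), which is the simplest. For $u\in L^q(\Omega)$, boundedness of $\Omega$ and the continuous embedding $L^q(\Omega)\hookrightarrow L^p(\Omega)$ (since $p<q$) give $\rho_{\mathcal{H}}(u)\le \|u\|_p^p+\|\mu\|_\infty\|u\|_q^q\le C\bigl(\|u\|_q^p+\|u\|_q^q\bigr)$, so $u\in L^{\mathcal{H}}(\Omega)$, and the norm--modular relation converts this modular estimate into $\|u\|_{\mathcal{H}}\le C'\|u\|_q$. Item (iv) is equally direct: from $\mu(x)|u|^q\le \mathcal{H}(x,|u|)$ one gets $\|u\|_{q,\mu}^q\le\rho_{\mathcal{H}}(u)$, and again the norm--modular equivalence gives $\|u\|_{q,\mu}\le C\max\{\|u\|_{\mathcal{H}},\|u\|_{\mathcal{H}}^{p/q}\}$, which yields continuity.

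For (i), the pointwise inequality $|u|^p\le \mathcal{H}(x,|u|)$ produces $\|u\|_p^p\le\rho_{\mathcal{H}}(u)$, hence $L^{\mathcal{H}}(\Omega)\hookrightarrow L^p(\Omega)$ continuously. Combining with the standard Lebesgue embedding $L^p(\Omega)\hookrightarrow L^r(\Omega)$ for $r\in[1,p]$ on a bounded domain finishes the scalar part. For the Sobolev part, the same argument applied to $|\nabla u|$ gives $\|\nabla u\|_p\le C\max\{\|\nabla u\|_{\mathcal{H}},\|\nabla u\|_{\mathcal{H}}^{q/p}\}$, so $W^{1,\mathcal{H}}_0(\Omega)\hookrightarrow W^{1,p}_0(\Omega)$ continuously (using the equivalent norm $\|\nabla u\|_{\mathcal{H}}$ on $W^{1,\mathcal{H}}_0(\Omega)$ from hypothesis (H)(i)); another application of the Lebesgue embedding for gradients yields the $W^{1,r}_0(\Omega)$ embedding for $r\in[1,p]$.

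Items (ii) and (iii) then follow by composition: from the step above, $W^{1,\mathcal{H}}_0(\Omega)\hookrightarrow W^{1,p}_0(\Omega)$ continuously, and the classical Sobolev embedding theorem gives $W^{1,p}_0(\Omega)\hookrightarrow L^r(\Omega)$ continuously for $r\in[1,p^*]$ and compactly for $r\in[1,p^*)$ (Rellich--Kondrachov, using that $\Omega$ is bounded with Lipschitz boundary and $1<p<N$). Composition of a continuous with a continuous (resp.\ compact) linear map preserves continuity (resp.\ compactness), which gives (ii) and (iii). The only mildly delicate point is to keep track of the two regimes ($\|u\|_{\mathcal{H}}\le 1$ versus $\|u\|_{\mathcal{H}}\ge 1$) when converting modular estimates into norm estimates, but no part of the argument is genuinely difficult once this bookkeeping is in place.
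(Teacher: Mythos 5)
Your argument is correct, and it is essentially the standard one: the paper itself states Proposition \ref{proposition_embeddings} without proof, citing Colasuonno--Squassina, Harjulehto--H\"ast\"o and related references, whose proofs proceed exactly as you do, via the pointwise bounds $|u|^p\le\mathcal{H}(x,|u|)$ and $\mu(x)|u|^q\le\mathcal{H}(x,|u|)$, the norm--modular inequalities of Proposition \ref{proposition_modular_properties}, the equivalent norm $\|\nabla\cdot\|_{\mathcal{H}}$ on $\WH$, and composition with the classical Sobolev and Rellich--Kondrachov embeddings of $\Wpzero{p}$. The only step you gloss --- turning bounds like $\|u\|_{\mathcal{H}}\le C\max\{\|u\|_q,\|u\|_q^{q/p}\}$ into genuine operator bounds --- is harmless, since the inclusions are linear and boundedness on the unit ball suffices.
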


The norm $\|\cdot\|_{\mathcal{H}}$ and the modular function $\rho_\mathcal{H}$ are related as follows, see Liu-Dai \cite[Proposition 2.1]{Liu-Dai-2018}.

\begin{proposition}\label{proposition_modular_properties}
	Let \textnormal{(H)(i)} be satisfied, let $y\in \Lp{\mathcal{H}}$ and let $\rho_{\mathcal{H}}$ be defined by \eqref{modular}. Then the following hold:
	\begin{enumerate}
		\item[\textnormal{(i)}]
		If $y\neq 0$, then $\|y\|_{\mathcal{H}}=\lambda$ if and only if $ \rho_{\mathcal{H}}(\frac{y}{\lambda})=1$;
		\item[\textnormal{(ii)}]
		$\|y\|_{\mathcal{H}}<1$ (resp.\,$>1$, $=1$) if and only if $ \rho_{\mathcal{H}}(y)<1$ (resp.\,$>1$, $=1$);
		\item[\textnormal{(iii)}]
		If $\|y\|_{\mathcal{H}}<1$, then $\|y\|_{\mathcal{H}}^q\leq \rho_{\mathcal{H}}(y)\leq\|y\|_{\mathcal{H}}^p$;
		\item[\textnormal{(iv)}]
		If $\|y\|_{\mathcal{H}}>1$, then $\|y\|_{\mathcal{H}}^p\leq \rho_{\mathcal{H}}(y)\leq\|y\|_{\mathcal{H}}^q$;
		\item[\textnormal{(v)}]
		$\|y\|_{\mathcal{H}}\to 0$ if and only if $ \rho_{\mathcal{H}}(y)\to 0$;
		\item[\textnormal{(vi)}]
		$\|y\|_{\mathcal{H}}\to +\infty$ if and only if $ \rho_{\mathcal{H}}(y)\to +\infty$.
	\end{enumerate}
\end{proposition}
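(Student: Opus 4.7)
The plan is to prove all six properties from the single key fact that for $y \in L^\mathcal{H}(\Omega)$ and $\lambda > 0$, the rescaled modular satisfies
\begin{align*}
\rho_\mathcal{H}(\lambda y) = \lambda^p \into |y|^p \,\mathrm{d} x + \lambda^q \into \mu(x)|y|^q \,\mathrm{d} x,
\end{align*}
which is a continuous, strictly increasing function of $\lambda$ on $(0,\infty)$ whenever $y \neq 0$, with limits $0$ as $\lambda \to 0^+$ and $+\infty$ as $\lambda \to +\infty$. Equivalently, $\tau \mapsto \rho_\mathcal{H}(y/\tau)$ is continuous and strictly decreasing on $(0,\infty)$ with range $(0,+\infty)$. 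Everything else follows from this monotonicity together with the elementary inequalities relating $\lambda^p$ and $\lambda^q$ depending on whether $\lambda < 1$ or $\lambda > 1$.

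For (i), by definition $\|y\|_\mathcal{H} = \inf\{\tau > 0 : \rho_\mathcal{H}(y/\tau) \le 1\}$, and because $\tau \mapsto \rho_\mathcal{H}(y/\tau)$ is continuous and strictly decreasing for $y \neq 0$, the infimum is actually attained and equals the unique $\tau$ with $\rho_\mathcal{H}(y/\tau)=1$. Setting $\lambda = \|y\|_\mathcal{H}$ then gives $\rho_\mathcal{H}(y/\lambda)=1$, and conversely if $\rho_\mathcal{H}(y/\lambda)=1$ then by strict monotonicity no smaller $\tau$ can satisfy $\rho_\mathcal{H}(y/\tau) \le 1$, so $\lambda = \|y\|_\mathcal{H}$. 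Part (ii) is then immediate: $\|y\|_\mathcal{H} < 1$ means the value $\tau = 1$ is already in $\{\tau : \rho_\mathcal{H}(y/\tau)\le 1\}$ with room to spare, and by monotonicity $\rho_\mathcal{H}(y) < 1$; the cases $=1$ and $>1$ are analogous.

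For (iii), suppose $\|y\|_\mathcal{H} < 1$ and write $\lambda = \|y\|_\mathcal{H}$. Since $p < q$ and $\lambda < 1$, we have $\lambda^q \le \lambda^p$. Using part (i),
\begin{align*}
\rho_\mathcal{H}(y) = \rho_\mathcal{H}\bigl(\lambda \cdot (y/\lambda)\bigr)
= \lambda^p \into |y/\lambda|^p\,\mathrm{d} x + \lambda^q \into \mu(x)|y/\lambda|^q\,\mathrm{d} x,
\end{align*}
so using $\lambda^q \le \lambda^p$ on the first term and $\lambda^p \ge \lambda^q$ on the second term (respectively the reverse) gives
\begin{align*}
\lambda^q \rho_\mathcal{H}(y/\lambda) \le \rho_\mathcal{H}(y) \le \lambda^p \rho_\mathcal{H}(y/\lambda),
\end{align*}
and since $\rho_\mathcal{H}(y/\lambda) = 1$ this is exactly $\|y\|_\mathcal{H}^q \le \rho_\mathcal{H}(y) \le \|y\|_\mathcal{H}^p$. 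Part (iv) follows by exactly the same computation with $\lambda > 1$, in which case $\lambda^p \le \lambda^q$, and the two inequalities flip.

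Finally, (v) and (vi) are immediate corollaries of (iii) and (iv): for small $\|y\|_\mathcal{H}$ the bound $\|y\|_\mathcal{H}^q \le \rho_\mathcal{H}(y) \le \|y\|_\mathcal{H}^p$ forces $\rho_\mathcal{H}(y)\to 0$, and conversely if $\rho_\mathcal{H}(y_n)\to 0$ then eventually $\rho_\mathcal{H}(y_n) < 1$, so $\|y_n\|_\mathcal{H} < 1$ by (ii), hence by (iii) $\|y_n\|_\mathcal{H}^q \le \rho_\mathcal{H}(y_n)\to 0$; the unboundedness case (vi) is symmetric using (iv). There is no real obstacle here—the only subtlety is making sure that the rescaling identity is applied with the correct exponent on each of the two terms, since the mixed $p$-$q$ nature of $\mathcal{H}$ is exactly what produces the two-sided bounds in (iii) and (iv); once that is unpacked, everything reduces to monotonicity of $\lambda \mapsto \rho_\mathcal{H}(\lambda y)$.
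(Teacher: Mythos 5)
Your proof is correct. The paper does not actually prove this proposition itself---it quotes it from Liu--Dai \cite{Liu-Dai-2018}---and your argument via the scaling identity $\rho_{\mathcal{H}}(\lambda y)=\lambda^p\int_\Omega|y|^p\,\mathrm{d}x+\lambda^q\int_\Omega\mu(x)|y|^q\,\mathrm{d}x$, the continuity and strict monotonicity of $\lambda\mapsto\rho_{\mathcal{H}}(\lambda y)$ for $y\neq 0$, and the elementary comparison of $\lambda^p$ with $\lambda^q$ is precisely the standard proof used there; the only cosmetic point is to dispose of the trivial case $y=0$ separately in (iii) and (v), since part (i) assumes $y\neq 0$.
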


Let $A\colon \WH\to \WH^*$ be the nonlinear map defined by
\begin{align}\label{operator_representation}
	\langle A(u),\ph\rangle_{\mathcal{H}} :=\into \big(|\nabla u|^{p-2}\nabla u+\mu(x)|\nabla u|^{q-2}\nabla u \big)\cdot\nabla\ph \,\mathrm{d} x
\end{align}
for all $u,\ph\in\WH$, where $\lan\,\cdot\,,\,\cdot\,\ran_{\mathcal{H}}$ is the duality pairing between $\WH$ and its dual space $\WH^*$.  The operator $A\colon \WH\to \WH^*$ has the following properties, see Liu-Dai \cite{Liu-Dai-2018}.

\begin{proposition}
	The operator $A$ defined by \eqref{operator_representation} is bounded (that is, it maps bounded sets into bounded sets), continuous, strictly monotone (hence maximal monotone) and it is of type $(\Ss_+)$.
\end{proposition}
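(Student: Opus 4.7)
The plan is to verify the four properties in turn, exploiting the decomposition $\langle A(u),\varphi\rangle_{\mathcal{H}} = \int_\Omega |\nabla u|^{p-2}\nabla u\cdot\nabla\varphi\,\mathrm{d} x + \int_\Omega \mu(x)|\nabla u|^{q-2}\nabla u\cdot\nabla\varphi\,\mathrm{d} x$ and treating each summand with its natural pairing. For \textbf{boundedness}, I would apply H\"older's inequality to each summand (splitting $\mu = \mu^{(q-1)/q}\cdot\mu^{1/q}$ on the second) to get $|\langle A(u),\varphi\rangle_{\mathcal{H}}|\le \|\nabla u\|_p^{p-1}\|\nabla\varphi\|_p + \|\nabla u\|_{q,\mu}^{q-1}\|\nabla\varphi\|_{q,\mu}$, then use the continuous embeddings of Proposition \ref{proposition_embeddings}(i),(iv) together with Proposition \ref{proposition_modular_properties}(iii)--(iv) to bound the right-hand side by $C(\|u\|_{1,\mathcal{H},0})\|\varphi\|_{1,\mathcal{H},0}$. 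For \textbf{continuity}, from $u_n\to u$ in $\WH$ one gets $\rho_{\mathcal{H}}(|\nabla u_n-\nabla u|)\to 0$ (Proposition \ref{proposition_modular_properties}(v)), hence a subsequence with $\nabla u_n(x)\to\nabla u(x)$ a.e.; the integrands defining $\langle A(u_n)-A(u),\varphi\rangle_{\mathcal{H}}$ then converge to $0$ a.e., and equiintegrability of $|\nabla u_n|^{p-1}$ in $L^{p/(p-1)}(\Omega)$ and of $\mu^{(q-1)/q}|\nabla u_n|^{q-1}$ in $L^{q/(q-1)}(\Omega;\mathrm{d} x)$ (again via Proposition \ref{proposition_modular_properties}) allows Vitali's theorem to conclude $A(u_n)\to A(u)$ in $\WH^*$ along the subsequence, and hence along the whole sequence by the usual subsequence principle.

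For \textbf{strict monotonicity}, I would use the well-known pointwise vector inequality $(|\xi|^{s-2}\xi-|\eta|^{s-2}\eta)\cdot(\xi-\eta)\ge 0$ for $s>1$, with equality iff $\xi=\eta$, applied with $s=p$ and with $s=q$ (the latter multiplied by the nonnegative factor $\mu(x)$). Integration yields $\langle A(u)-A(v),u-v\rangle_{\mathcal{H}}\ge 0$, and equality forces $\nabla u=\nabla v$ a.e.\ in $\Omega$; since $u-v\in\WH$, the continuous embedding $\WH\hookrightarrow\Wpzero{p}$ in Proposition \ref{proposition_embeddings}(i) combined with the Poincar\'e inequality in $\Wpzero{p}$ yields $u=v$. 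Maximal monotonicity is then automatic: on a reflexive Banach space a monotone, hemicontinuous (in particular, continuous) operator is maximal monotone by the classical Browder--Minty theorem.

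The \textbf{$(S_+)$-property} is where the main work lies. Assume $u_n\rightharpoonup u$ in $\WH$ and $\limsup_{n\to\infty}\langle A(u_n),u_n-u\rangle_{\mathcal{H}}\le 0$. Since $A(u)\in\WH^*$, weak convergence gives $\langle A(u),u_n-u\rangle_{\mathcal{H}}\to 0$, and monotonicity combined with the $\limsup$ condition forces the nonnegative quantity $\langle A(u_n)-A(u),u_n-u\rangle_{\mathcal{H}}$ to converge to $0$. Writing this as $\int_\Omega e_n\,\mathrm{d} x$ with
\begin{align*}
e_n(x) &= \bigl(|\nabla u_n|^{p-2}\nabla u_n-|\nabla u|^{p-2}\nabla u\bigr)\cdot(\nabla u_n-\nabla u)\\
&\quad + \mu(x)\bigl(|\nabla u_n|^{q-2}\nabla u_n-|\nabla u|^{q-2}\nabla u\bigr)\cdot(\nabla u_n-\nabla u),
\end{align*}
each summand is nonnegative, so both integrate to $0$ in the limit; passing to a subsequence gives $e_n(x)\to 0$ a.e.\ and the standard pointwise estimates for the $s$-Laplacian nonlinearity (treating $s\ge 2$ and $1<s<2$ separately) force $\nabla u_n(x)\to\nabla u(x)$ a.e.\ and, similarly, $\mu(x)^{1/q}\nabla u_n(x)\to\mu(x)^{1/q}\nabla u(x)$ a.e.

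The hard part is upgrading this a.e.\ convergence of gradients to modular convergence $\rho_{\mathcal{H}}(|\nabla u_n-\nabla u|)\to 0$, which by Proposition \ref{proposition_modular_properties}(v) is equivalent to $\|\nabla u_n-\nabla u\|_{\mathcal{H}}\to 0$, i.e.\ $u_n\to u$ in $\WH$. I would proceed by a Fatou/Brezis--Lieb type argument: the pointwise inequalities already ensure weak lower semicontinuity of $u\mapsto\int_\Omega |\nabla u|^p$ and $u\mapsto\int_\Omega \mu(x)|\nabla u|^q$, while combining $\int_\Omega e_n\,\mathrm{d} x\to 0$ with $\langle A(u),u_n-u\rangle_{\mathcal{H}}\to 0$ yields $\langle A(u_n),u_n\rangle_{\mathcal{H}}\to\langle A(u),u\rangle_{\mathcal{H}}$; separating this into its $p$- and $(q,\mu)$-contributions with the help of Fatou then gives $\|\nabla u_n\|_p\to\|\nabla u\|_p$ and $\|\nabla u_n\|_{q,\mu}\to\|\nabla u\|_{q,\mu}$. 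Uniform convexity of $L^p(\Omega;\R^N)$ together with a Vitali argument for the $\mu$-weighted term (handling the seminorm character of $L^q_\mu(\Omega;\R^N)$) then upgrades weak convergence to strong convergence of $\nabla u_n$ to $\nabla u$ in $L^p(\Omega;\R^N)$ and in $L^q_\mu(\Omega;\R^N)$, which is equivalent to modular convergence in $L^{\mathcal{H}}(\Omega;\R^N)$ and finishes the proof.
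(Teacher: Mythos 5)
Your argument is correct; note, however, that the paper does not prove this proposition at all but simply cites Liu--Dai \cite{Liu-Dai-2018}, where the result (their Proposition 3.1) is established by essentially the same standard route you follow: H\"older estimates for boundedness, Nemytskii-type continuity, the elementary vector inequalities for strict monotonicity, and for the $(S_+)$-property the monotonicity trick giving $\langle A(u_n)-A(u),u_n-u\rangle_{\mathcal{H}}\to 0$, a.e.\ convergence of gradients, and a Fatou/Brezis--Lieb upgrade to modular convergence combined with Proposition \ref{proposition_modular_properties}(v). The only step worth spelling out in your sketch is the identification $\langle A(u_n),u\rangle_{\mathcal{H}}\to\langle A(u),u\rangle_{\mathcal{H}}$, which follows from boundedness of $|\nabla u_n|^{p-2}\nabla u_n$ in $L^{p'}(\Omega;\R^N)$ and of $\mu^{(q-1)/q}|\nabla u_n|^{q-2}\nabla u_n$ in $L^{q'}(\Omega;\R^N)$ together with their a.e.\ convergence, a standard lemma that is implicit in your "with the help of Fatou" phrase.
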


\section{Proof of the main result}

In this section we are going to prove our main result stated as Theorem \ref{main_result} in Section \ref{section_1}.

To this end, recall that $\ph_\lambda\colon\WH\to \R$ is the corresponding energy function for problem \eqref{problem} given by
\begin{align*}
		\ph_{\lambda}(u)=\frac{1}{p} \|\nabla u\|_p^p+\frac{1}{q}\|\nabla u\|_{q,\mu}^q-\frac{1}{1-\gamma}\into a(x)|u|^{1-\gamma}\,\mathrm{d} x-\frac{\lambda}{r}\|u\|_r^r.
\end{align*}
Due to the presence of the singular term $a(x)|u|^{1-\gamma}$ we know that $\ph_\lambda$ is not $C^1$. In order to overcome this, we will make use of the fibering method along with the Nehari manifold mentioned in the Introduction. Now we consider the fibering function $\omega_u\colon[0,+\infty)\to \R$ for $u\in \WH$ defined by 
\begin{align*}
	\omega_u(t)=\ph_\lambda (tu)\quad\text{for all }t\geq 0.
\end{align*}
The Nehari manifold corresponding to the functional $\ph_\lambda$ is defined by
\begin{align*}
	\mathcal{N}_\lambda
	&=\l\{u\in\WH\setminus\{0\}\,:\,\|\nabla u\|_p^p+\|\nabla u\|_{q,\mu}^q=\into a(x)|u|^{1-\gamma}\,\mathrm{d} x+\lambda \|u\|_r^r\r\}\\
	& =\l\{u\in\WH\setminus\{0\}\,:\, \omega_u'(1)=0\r\}.
\end{align*}
It is easy to see that $\mathcal{N}_\lambda$ is smaller than $\WH$ and it contains the weak solutions of problem \eqref{problem}. We will see that the functional $\ph_\lambda$ has nice properties restricted to $\mathcal{N}_\lambda$ which fail globally. For our further considerations we need to decompose the set $\mathcal{N}_\lambda$ in the following way:
\begin{align*}
	\mathcal{N}_\lambda^+
	&=\l\{u \in \mathcal{N}_\lambda: (p+\gamma-1)\|\nabla u\|_p^p+(q+\gamma-1)\|\nabla u\|_{q,\mu}^q-\lambda (r+\gamma-1) \|u\|_r^r>0\r\}\\
	& =\l\{u\in\mathcal{N}_\lambda\,:\, \omega_u''(1)>0\r\},\\
	\mathcal{N}_\lambda^0
	&=\l\{u \in \mathcal{N}_\lambda: (p+\gamma-1)\|\nabla u\|_p^p+(q+\gamma-1)\|\nabla u\|_{q,\mu}^q=\lambda (r+\gamma-1) \|u\|_r^r\r\}\\
	& =\l\{u\in\mathcal{N}_\lambda\,:\, \omega_u''(1)=0\r\},\\
	\mathcal{N}_\lambda^-
	&=\l\{u \in \mathcal{N}_\lambda: (p+\gamma-1)\|\nabla u\|_p^p+(q+\gamma-1)\|\nabla u\|_{q,\mu}^q-\lambda (r+\gamma-1) \|u\|_r^r<0\r\}\\
	& =\l\{u\in\mathcal{N}_\lambda\,:\, \omega_u''(1)<0\r\}.
\end{align*}

We start with the following proposition about the coercivity of the energy functional $\ph_\lambda$ restricted to $\mathcal{N}_\lambda$.

\begin{proposition}\label{prop_coerivity}
	Let hypotheses \textnormal{(H)} be satisfied. Then $\ph_\lambda\big|_{\mathcal{N}_\lambda}$ is coercive.
\end{proposition}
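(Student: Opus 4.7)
The plan is to use the Nehari constraint to eliminate the term carrying the wrong sign (namely $-\frac{\lambda}{r}\|u\|_r^r$) from $\ph_\lambda$, and then use the modular inequalities of Proposition 2.3 together with the embeddings of Proposition 2.2 to get a lower bound of the form ``large power $-$ small power'' of the norm.

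\textbf{Step 1 (algebraic reduction on $\mathcal{N}_\lambda$).} For $u\in\mathcal{N}_\lambda$ we have
\[
\lambda\|u\|_r^r=\|\nabla u\|_p^p+\|\nabla u\|_{q,\mu}^q-\into a(x)|u|^{1-\gamma}\,\mathrm{d} x.
\]
Substituting this identity into $\ph_\lambda(u)$ and collecting terms gives
\[
\ph_\lambda(u)=\left(\tfrac{1}{p}-\tfrac{1}{r}\right)\|\nabla u\|_p^p+\left(\tfrac{1}{q}-\tfrac{1}{r}\right)\|\nabla u\|_{q,\mu}^q-\left(\tfrac{1}{1-\gamma}-\tfrac{1}{r}\right)\into a(x)|u|^{1-\gamma}\,\mathrm{d} x.
\]
By (H)(i)--(ii) we have $p<q<r$ and $1-\gamma<1<r$, so all three coefficients in front of the (positive) expressions are strictly positive.

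\textbf{Step 2 (lower bound for the gradient part).} Suppose $\|u\|_{1,\mathcal{H},0}>1$. By Proposition \ref{proposition_modular_properties}(iv) applied to $\nabla u\in L^{\mathcal H}(\Omega;\R^N)$,
\[
\|\nabla u\|_p^p+\|\nabla u\|_{q,\mu}^q=\rho_{\mathcal H}(\nabla u)\ge \|\nabla u\|_{\mathcal H}^{p}=\|u\|_{1,\mathcal{H},0}^p.
\]
Since both coefficients in Step 1 are positive and $\frac{1}{p}-\frac{1}{r}\le\frac{1}{q}-\frac{1}{r}$ is not needed, one gets a constant $c_1>0$ with
\[
\left(\tfrac{1}{p}-\tfrac{1}{r}\right)\|\nabla u\|_p^p+\left(\tfrac{1}{q}-\tfrac{1}{r}\right)\|\nabla u\|_{q,\mu}^q\ge c_1\|u\|_{1,\mathcal{H},0}^p.
\]

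\textbf{Step 3 (upper bound for the singular part).} Since $0<1-\gamma<1<p$, H\"{o}lder's inequality with exponent $\frac{p}{1-\gamma}$ gives
\[
\into a(x)|u|^{1-\gamma}\,\mathrm{d} x\le \|a\|_\infty|\Omega|^{\frac{p-(1-\gamma)}{p}}\|u\|_p^{1-\gamma}.
\]
By Proposition \ref{proposition_embeddings}(ii) the embedding $\WH\hookrightarrow\Lp{p}$ is continuous, so $\|u\|_p\le C\|u\|_{1,\mathcal{H},0}$ and therefore
\[
\into a(x)|u|^{1-\gamma}\,\mathrm{d} x\le c_2\|u\|_{1,\mathcal{H},0}^{1-\gamma}
\]
for a constant $c_2>0$ independent of $u$ and $\lambda$.

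\textbf{Step 4 (conclusion).} Combining Steps 1--3, for every $u\in\mathcal{N}_\lambda$ with $\|u\|_{1,\mathcal{H},0}>1$ we obtain
\[
\ph_\lambda(u)\ge c_1\|u\|_{1,\mathcal{H},0}^p-c_3\|u\|_{1,\mathcal{H},0}^{1-\gamma},
\]
where $c_3=\bigl(\tfrac{1}{1-\gamma}-\tfrac{1}{r}\bigr)c_2>0$. Because $p>1>1-\gamma$, the right-hand side tends to $+\infty$ as $\|u\|_{1,\mathcal{H},0}\to+\infty$, proving coercivity.

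\textbf{Main difficulty.} The substance of the argument is the algebraic elimination in Step 1: the unbounded $L^r$ term vanishes thanks to the Nehari identity, which is exactly the feature that makes $\ph_\lambda$ better behaved on $\mathcal{N}_\lambda$ than globally. The only point requiring a bit of care is invoking Proposition \ref{proposition_modular_properties}(iv) with the smaller exponent $p$ to bound $\rho_{\mathcal H}(\nabla u)$ from below by $\|u\|_{1,\mathcal{H},0}^p$ (rather than $\|u\|_{1,\mathcal{H},0}^q$), since one needs the lower power in the Musielak--Orlicz setting; this is what makes the final ``$p$ versus $1-\gamma$'' comparison work.
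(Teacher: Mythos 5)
Your proof is correct and follows essentially the same route as the paper's: both eliminate the $\lambda\|u\|_r^r$ term via the Nehari identity, bound the remaining gradient terms from below by $c_1\rho_{\mathcal H}(\nabla u)\geq c_1\|u\|_{1,\mathcal H,0}^p$ using Proposition~\ref{proposition_modular_properties}(iv), and dominate the singular integral by $c_2\|u\|_{1,\mathcal H,0}^{1-\gamma}$ via H\"older's inequality and the embedding $\WH\hookrightarrow\Lp{p}$, concluding from $p>1>1-\gamma$. (One small slip in Step~2: the displayed inequality $\tfrac{1}{p}-\tfrac{1}{r}\leq\tfrac{1}{q}-\tfrac{1}{r}$ is in fact reversed since $p<q$, but since you immediately note it is not needed and simply take the minimum of the two positive coefficients, the argument is unaffected.)
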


\begin{proof}
	Let $u \in \mathcal{N}_\lambda$ with $\|u\|_{1,\mathcal{H},0}>1$. From the definition of the Nehari manifold $\mathcal{N}_\lambda$ we have
	\begin{align}\label{prop_1}
		-\frac{\lambda}{r}\|u\|_r^r=-\frac{1}{r}  \|\nabla u\|_p^p-\frac{1}{r}\|\nabla u\|_{q,\mu}^q+\frac{1}{r}\into a(x)|u|^{1-\gamma}\,\mathrm{d} x.
	\end{align}
	Combining \eqref{prop_1} with $\ph_\lambda$ and applying Proposition \ref{proposition_modular_properties}\textnormal{(iv)} along with Theorem 13.17 of Hewitt-Stromberg \cite[p.\,196]{Hewitt-Stromberg-1965} gives
	\begin{align*}
		\ph_\lambda(u)&=\l[\frac{1}{p}-\frac{1}{r} \r]\|\nabla u\|_p^p+\l[\frac{1}{q}-\frac{1}{r} \r]\|\nabla u\|_{q,\mu}^q+\l[\frac{1}{r}-\frac{1}{1-\gamma} \r]\into a(x)|u|^{1-\gamma}\,\mathrm{d} x\\
		& \geq \l[\frac{1}{q}-\frac{1}{r} \r]\rho_\mathcal{H}(\nabla u) +\l[\frac{1}{r}-\frac{1}{1-\gamma} \r]\into a(x)|u|^{1-\gamma}\,\mathrm{d} x\\
		& \geq c_1 \|u\|_{1,\mathcal{H},0}^p-c_2\|u\|_{1,\mathcal{H},0}^{1-\gamma}
	\end{align*}
	for some $c_1,c_2>0$ because of $p<q<r$. Hence, due to $1-\gamma<1<p$, the assertion of the proposition follows. 
\end{proof}

Let $m_\lambda^+=\inf_{\mathcal{N}_\lambda^+}\ph_\lambda$.

\begin{proposition}\label{prop_negative_energy}
	Let hypotheses \textnormal{(H)} be satisfied and suppose that $\mathcal{N}_\lambda^+\neq \emptyset$. Then $m_\lambda^+<0$.
\end{proposition}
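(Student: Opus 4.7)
The plan is to show that every $u \in \mathcal{N}_\lambda^+$ already satisfies $\ph_\lambda(u) < 0$; since $\mathcal{N}_\lambda^+$ is nonempty by hypothesis, this immediately yields $m_\lambda^+ \le \ph_\lambda(u) < 0$. Any such $u$ carries two pieces of information, namely the Nehari identity
\begin{align*}
\|\nabla u\|_p^p + \|\nabla u\|_{q,\mu}^q = \into a(x)|u|^{1-\gamma}\,\mathrm{d} x + \lambda \|u\|_r^r
\end{align*}
coming from $u \in \mathcal{N}_\lambda$, together with the strict inequality
\begin{align*}
(p+\gamma-1)\|\nabla u\|_p^p + (q+\gamma-1)\|\nabla u\|_{q,\mu}^q > \lambda(r+\gamma-1)\|u\|_r^r
\end{align*}
coming from $\omega_u''(1) > 0$. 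The central idea is to use the identity to purge the $\lambda\|u\|_r^r$ term from both expressions so the remaining quantities can be directly compared.

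Substituting $\lambda\|u\|_r^r = \|\nabla u\|_p^p + \|\nabla u\|_{q,\mu}^q - \into a(x)|u|^{1-\gamma}\,\mathrm{d} x$ into the integral defining $\ph_\lambda(u)$ produces
\begin{align*}
\ph_\lambda(u) = \frac{r-p}{rp}\|\nabla u\|_p^p + \frac{r-q}{rq}\|\nabla u\|_{q,\mu}^q - \frac{r+\gamma-1}{r(1-\gamma)}\into a(x)|u|^{1-\gamma}\,\mathrm{d} x,
\end{align*}
and substituting the same identity into the $\mathcal{N}_\lambda^+$ inequality gives, after rearranging,
\begin{align*}
(r+\gamma-1)\into a(x)|u|^{1-\gamma}\,\mathrm{d} x > (r-p)\|\nabla u\|_p^p + (r-q)\|\nabla u\|_{q,\mu}^q.
\end{align*}
All of the coefficients $r-p$, $r-q$, $r+\gamma-1$, $1-\gamma$ are strictly positive under hypothesis~(H), which is what makes these manipulations legal.

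To finish, I would divide this last inequality by $r(1-\gamma) > 0$ and then invoke the strict bounds $\frac{1}{1-\gamma} > \frac{1}{p} > \frac{1}{q}$, which hold because (H) forces $1-\gamma < 1 < p < q$, to obtain
\begin{align*}
\frac{r+\gamma-1}{r(1-\gamma)}\into a(x)|u|^{1-\gamma}\,\mathrm{d} x &> \frac{r-p}{r(1-\gamma)}\|\nabla u\|_p^p + \frac{r-q}{r(1-\gamma)}\|\nabla u\|_{q,\mu}^q\\
&> \frac{r-p}{rp}\|\nabla u\|_p^p + \frac{r-q}{rq}\|\nabla u\|_{q,\mu}^q.
\end{align*}
Comparing this with the formula for $\ph_\lambda(u)$ above yields $\ph_\lambda(u) < 0$, as desired. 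The only real obstacle I anticipate is conceptual rather than technical: one must notice that eliminating the $\lambda \|u\|_r^r$ term via the Nehari identity in both the functional and in the $\mathcal{N}_\lambda^+$ condition produces two expressions whose structures match up, so that a single chain of inequalities exploiting $1-\gamma < p < q < r$ suffices to close the gap between the positive gradient contributions and the singular contribution.
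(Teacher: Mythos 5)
Your proof is correct and uses essentially the same idea as the paper: combine the Nehari identity with the strict $\mathcal{N}_\lambda^+$ inequality and exploit the ordering $1-\gamma<p<q<r$. The only (cosmetic) difference is the choice of which term to eliminate — you substitute the Nehari identity to purge $\lambda\|u\|_r^r$ from both $\ph_\lambda$ and the $\mathcal{N}_\lambda^+$ condition and then compare coefficients, while the paper instead eliminates the singular integral from $\ph_\lambda$ and bounds the remaining $\lambda\|u\|_r^r$ term directly by the $\mathcal{N}_\lambda^+$ inequality; both routes lead to the same cancellation.
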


\begin{proof}
	Let $u \in \mathcal{N}_\lambda^+$. First note that $\mathcal{N}_\lambda^+\subseteq \mathcal{N}_\lambda$ which implies that
	\begin{align}\label{prop_3}
		-\frac{1}{1-\gamma}\into a(x)|u|^{1-\gamma}\,\mathrm{d} x =-\frac{1}{1-\gamma} \l(\|\nabla u\|_p^p+\|\nabla u\|_{q,\mu}^q\r)+\frac{\lambda}{1-\gamma}\|u\|_r^r.
	\end{align}
	On the other hand, by definition of $\mathcal{N}_\lambda^+$, we have
	\begin{align}\label{prop_2}
		\lambda \|u\|_r^r<\frac{p+\gamma-1}{r+\gamma-1}\|\nabla u\|_p^p+\frac{q+\gamma-1}{r+\gamma-1}\|\nabla u\|_{q,\mu}^q.
	\end{align}
	From \eqref{prop_2} and \eqref{prop_3} it follows that
	\begin{align*}
			\ph_{\lambda}(u)&=\frac{1}{p} \|\nabla u\|_p^p+\frac{1}{q}\|\nabla u\|_{q,\mu}^q-\frac{1}{1-\gamma}\into a(x)|u|^{1-\gamma}\,\mathrm{d} x-\frac{\lambda}{r}\|u\|_r^r\\
			& =\l[\frac{1}{p} -\frac{1}{1-\gamma}\r]\|\nabla u\|_p^p+\l[\frac{1}{q} -\frac{1}{1-\gamma}\r]\|\nabla u\|_{q,\mu}^q+\lambda \l[\frac{1}{1-\gamma}-\frac{1}{r}\r]\|u\|_r^r\\
			& \leq \l[\frac{-(p+\gamma-1)}{p(1-\gamma)}+\frac{p+\gamma-1}{r+\gamma-1}\cdot \frac{r+\gamma-1}{r(1-\gamma)}\r]\|\nabla u\|_p^p\\
			&\quad +\l[\frac{-(q+\gamma-1)}{q(1-\gamma)}+\frac{q+\gamma-1}{r+\gamma-1}\cdot \frac{r+\gamma-1}{r(1-\gamma)}\r]\|\nabla u\|_{q,\mu}^q\\
			& = \frac{p+\gamma-1}{1-\gamma}\l[\frac{1}{r}-\frac{1}{p}\r]\|\nabla u\|_p^p+\frac{q+\gamma-1}{1-\gamma}\l[\frac{1}{r}-\frac{1}{q}\r]\|\nabla u\|_{q,\mu}^q\\
			&<0, 
	\end{align*}
	since $p<q<r$. Hence, $\ph_\lambda \big|_{\mathcal{N}_\lambda^+}<0$ and so $m_\lambda^+<0$.
\end{proof}

\begin{proposition}\label{prop_emptiness}
	Let hypotheses \textnormal{(H)} be satisfied. Then there exists $\lambda^*>0$ such that $\mathcal{N}^0_\lambda=\emptyset$ for all $\lambda \in (0,\lambda^*)$.
\end{proposition}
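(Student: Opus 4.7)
The plan is to argue by contradiction: assume there is a sequence $\lambda_n\searrow 0$ with $u_n\in\mathcal{N}_{\lambda_n}^0$, and derive incompatible upper and lower bounds on $\|u_n\|_{1,\mathcal{H},0}$. Writing (A) for the Nehari identity and (B) for the $\mathcal{N}_\lambda^0$-identity, I first eliminate $\lambda\|u\|_r^r$: multiply (A) by $(r+\gamma-1)$ and subtract (B) to obtain the $\lambda$-free relation
\begin{align*}
(r-p)\|\nabla u_n\|_p^p+(r-q)\|\nabla u_n\|_{q,\mu}^q=(r+\gamma-1)\into a(x)|u_n|^{1-\gamma}\,\mathrm{d} x.
\end{align*}
Both coefficients on the left are positive since $p<q<r$, so the left side dominates $(r-q)\rho_{\mathcal{H}}(\nabla u_n)$.

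Next, I bound the right-hand side from above using $a\in\Linf$ and the continuous embedding $\WH\hookrightarrow\Lp{1-\gamma}$ (Proposition \ref{proposition_embeddings}(iii)), to get $\rho_{\mathcal{H}}(\nabla u_n)\le C_1\|u_n\|_{1,\mathcal{H},0}^{1-\gamma}$ for some constant $C_1>0$. Invoking Proposition \ref{proposition_modular_properties}(iii)--(iv), a case split on whether $\|u_n\|_{1,\mathcal{H},0}$ is $\le 1$ or $>1$ yields $\|u_n\|_{1,\mathcal{H},0}\le M$ for some $M>0$ independent of $n$ (the case $\|u_n\|_{1,\mathcal{H},0}\le 1$ already gives a uniform bound; in the case $\|u_n\|_{1,\mathcal{H},0}>1$ the estimate reduces to $\|u_n\|_{1,\mathcal{H},0}^{p-(1-\gamma)}\le C_1$, with $p-(1-\gamma)>0$).

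For the lower bound, I return to (B) and use $(p+\gamma-1)\le(q+\gamma-1)$ to conclude
\begin{align*}
(p+\gamma-1)\rho_{\mathcal{H}}(\nabla u_n)\le\lambda_n(r+\gamma-1)\|u_n\|_r^r\le C_2\lambda_n\|u_n\|_{1,\mathcal{H},0}^r,
\end{align*}
using $\WH\hookrightarrow\Lp{r}$ (legitimate since $r<p^*$). Again by Proposition \ref{proposition_modular_properties}, if $\|u_n\|_{1,\mathcal{H},0}<1$ then $\|u_n\|_{1,\mathcal{H},0}^q\le \tfrac{C_2(r+\gamma-1)}{p+\gamma-1}\lambda_n\|u_n\|_{1,\mathcal{H},0}^r$, i.e.\ $\|u_n\|_{1,\mathcal{H},0}^{q-r}\le C_3\lambda_n$; since $q<r$, the left side is $>1$, so this fails as soon as $\lambda_n<1/C_3$. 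Hence $\|u_n\|_{1,\mathcal{H},0}\ge 1$ for large $n$, and in that regime the same estimate gives $\|u_n\|_{1,\mathcal{H},0}^{r-p}\ge C_4/\lambda_n\to\infty$, contradicting the upper bound $M$. Choosing $\lambda^*>0$ so that both thresholds are respected yields $\mathcal{N}_\lambda^0=\emptyset$ for $\lambda\in(0,\lambda^*)$.

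The only delicate step is juggling the modular versus the norm in the two regimes $\|u_n\|_{1,\mathcal{H},0}\lessgtr 1$, which is the standard price for working in Musielak--Orlicz spaces; apart from that the argument is a clean scaling contradiction using the embeddings from Proposition \ref{proposition_embeddings} and is independent of any regularity of $u_n$.
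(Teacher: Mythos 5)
Your proof is correct and follows the paper's argument almost line by line: subtract the $\mathcal{N}_\lambda^0$-identity from $(r+\gamma-1)$ times the Nehari identity to eliminate $\lambda$ and obtain a uniform upper bound on $\|u_n\|_{1,\mathcal{H},0}$, then extract a $\lambda$-dependent lower bound from the $\mathcal{N}_\lambda^0$-identity alone, and send $\lambda\to 0^+$; your explicit two-regime split on $\|u_n\|_{1,\mathcal{H},0}\lessgtr 1$ just spells out what the paper compresses into the $\min\{\|u\|^p,\|u\|^q\}$ estimates. One small citation slip worth fixing: the embedding $\WH\hookrightarrow \Lp{1-\gamma}$ is not literally an instance of Proposition \ref{proposition_embeddings}(iii), which requires $r\in[1,p^*)$ while $1-\gamma<1$; since $\Omega$ is bounded you get it from $L^1(\Omega)\hookrightarrow L^{1-\gamma}(\Omega)$, or directly via H\"older as the paper does with its Hewitt--Stromberg reference.
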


\begin{proof}
	Arguing indirectly, suppose that  for every $\lambda^*>0$ there exists $\lambda \in (0,\lambda^*)$ such that $\mathcal{N}^0_\lambda \neq \emptyset$. Hence, for any given $\lambda>0$, we can find $u\in \mathcal{N}_\lambda^0$ such that
	\begin{align}\label{prop_4}
		(p+\gamma-1)\|\nabla u\|_p^p+(q+\gamma-1)\|\nabla u\|_{q,\mu}^q=\lambda (r+\gamma-1) \|u\|_r^r.
	\end{align}
	Since $u \in \mathcal{N}_\lambda$, one also has
	\begin{align}\label{prop_5}
		\begin{split}
		&(r+\gamma-1) \|\nabla u\|_p^p+(r+\gamma-1) \|\nabla u\|_{q,\mu}^q\\
		& = (r+\gamma-1)\into a(x)|u|^{1-\gamma}\,\mathrm{d} x+\lambda (r+\gamma-1)\|u\|_r^r.
		\end{split}
	\end{align}
	Subtracting \eqref{prop_4} from \eqref{prop_5} yields
	\begin{align}\label{prop_6}
			&(r-p) \|\nabla u\|_p^p+(r-q) \|\nabla u\|_{q,\mu}^q= (r+\gamma-1)\into a(x)|u|^{1-\gamma}\,\mathrm{d} x.
	\end{align}
	Applying Proposition \ref{proposition_modular_properties}\textnormal{(iii), (iv)}, Theorem 13.17 of Hewitt-Stromberg \cite[p.\,196]{Hewitt-Stromberg-1965} and Proposition \ref{proposition_embeddings}\textnormal{(ii)} we get from \eqref{prop_6} that
	\begin{align*}
		\min\l\{\|u\|_{1,\mathcal{H},0}^p,\|u\|_{1,\mathcal{H},0}^q\r\} \leq c_3 \|u\|_{1,\mathcal{H},0}^{1-\gamma}
	\end{align*}
	for some $c_3>0$ since $1-\gamma<1<p<q<r$. Hence
	\begin{align}\label{prop_7}
		\|u\|_{1,\mathcal{H},0} \leq c_4
	\end{align}
	for some $c_4>0$.

	On the other hand, from \eqref{prop_4}, Proposition \ref{proposition_modular_properties}\textnormal{(iii), (iv)} and Proposition \ref{proposition_embeddings}\textnormal{(ii)} we have
	\begin{align*}
		\min\l\{\|u\|_{1,\mathcal{H},0}^p,\|u\|_{1,\mathcal{H},0}^q\r\} \leq \lambda c_5 \|u\|_{1,\mathcal{H},0}^{r}
	\end{align*}
	for some $c_5>0$. Consequently,
	\begin{align*}
		\|u\|_{1,\mathcal{H},0} \geq \l(\frac{1}{\lambda c_5}\r)^{\frac{1}{r-p}}
		\quad\text{or}\quad
		\|u\|_{1,\mathcal{H},0} \geq \l(\frac{1}{\lambda c_5}\r)^{\frac{1}{r-q}}.
	\end{align*}
	If $\lambda \to 0^+$, due to $p<q<r$, then $\|u\|_{1,\mathcal{H},0}\to +\infty$, which contradicts \eqref{prop_7}.
\end{proof}

\begin{proposition}\label{prop_nonemptiness_and_existence}
	Let hypotheses \textnormal{(H)} be satisfied. Then there exists $\hat{\lambda}^*\in (0,\lambda^*]$ such that $\mathcal{N}^{\pm}_\lambda\neq \emptyset$ for all $\lambda \in (0,\hat{\lambda}^*)$. In addition, for any $\lambda \in (0,\hat{\lambda}^*)$, there exists $u^*\in\mathcal{N}_\lambda^+$ such that $\ph_\lambda(u^*)=m_\lambda^+<0$ and $u^*(x) \geq 0$ for a.\,a.\,$x\in\Omega$.
\end{proposition}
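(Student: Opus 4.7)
The plan is to exploit the fibering function $\omega_u(t)=\ph_\lambda(tu)$ for $u\in\WH\setminus\{0\}$, $t\geq 0$. A direct computation gives
\[
t^\gamma\omega_u'(t)=\zeta_u(t)-\into a(x)|u|^{1-\gamma}\,\mathrm{d} x,\quad \zeta_u(t):=\|\nabla u\|_p^p\,t^{p+\gamma-1}+\|\nabla u\|_{q,\mu}^q\,t^{q+\gamma-1}-\lambda\|u\|_r^r\,t^{r+\gamma-1}.
\]
Since $0<p+\gamma-1<q+\gamma-1<r+\gamma-1$, the map $\zeta_u$ starts at $0$, attains a unique maximum at some $t_*(u)>0$, and tends to $-\infty$. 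Consequently, $\omega_u'(t)=0$ has exactly two positive roots $0<t_1(u)<t_*(u)<t_2(u)$ whenever $\zeta_u(t_*(u))>\into a(x)|u|^{1-\gamma}\,\mathrm{d} x$; at $t_1(u)$ the function $\omega_u$ has a local minimum, so $t_1(u)u\in\mathcal{N}_\lambda^+$, and at $t_2(u)$ a local maximum, so $t_2(u)u\in\mathcal{N}_\lambda^-$. Fixing a nontrivial nonnegative $u_0\in\WH$ and noting that $\zeta_{u_0}(t_*(u_0))\to +\infty$ as $\lambda\to 0^+$, one obtains $\hat\lambda^*\in(0,\lambda^*]$ such that for every $\lambda\in(0,\hat\lambda^*)$ the strict inequality above holds; this proves $\mathcal{N}_\lambda^\pm\neq\emptyset$.

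For the existence of a minimizer, I take a minimizing sequence $\{u_n\}\subset\mathcal{N}_\lambda^+$ with $\ph_\lambda(u_n)\to m_\lambda^+$. Proposition~\ref{prop_coerivity} gives boundedness in $\WH$, and by reflexivity one has (along a subsequence) $u_n\weak u^*$ in $\WH$. The compact embedding in Proposition~\ref{proposition_embeddings}(iii) (valid since $r<p^*$ and $p<p^*$) yields $u_n\to u^*$ in $L^r(\Omega)$ and in $L^p(\Omega)$ and pointwise a.e. Replacing $u_n$ by $|u_n|$, which preserves both $\ph_\lambda$ and $\mathcal{N}_\lambda^+$, I may assume $u_n\geq 0$, whence $u^*\geq 0$ a.e. To exclude $u^*\equiv 0$, I use the Nehari identity: otherwise $\|u_n\|_r\to 0$ and $\into a(x)u_n^{1-\gamma}\,\mathrm{d} x\to 0$ (the latter by H\"older together with $u_n\to 0$ in $L^p(\Omega)$ and $a\in L^\infty(\Omega)$), forcing $\|\nabla u_n\|_p^p+\|\nabla u_n\|_{q,\mu}^q\to 0$ and $\ph_\lambda(u_n)\to 0$, which contradicts $m_\lambda^+<0$ from Proposition~\ref{prop_negative_energy}.

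The main obstacle is to pass the Nehari constraint to the weak limit and identify $u^*$ as a minimizer in $\mathcal{N}_\lambda^+$. Weak lower semicontinuity of the gradient seminorms together with the strong convergence of the compactly embedded terms gives $\omega_{u^*}'(1)\leq 0$ and $\ph_\lambda(u^*)\leq m_\lambda^+$. Applying the fibering analysis of the first paragraph to $u^*$ (for $\hat\lambda^*$ possibly shrunk further) yields $t_1(u^*)>0$ with $\tilde u:=t_1(u^*)u^*\in\mathcal{N}_\lambda^+$, and hence $\ph_\lambda(\tilde u)\geq m_\lambda^+$. In the case $1\leq t_1(u^*)$, monotonicity of $\omega_{u^*}$ on $[0,t_1(u^*)]$ gives $\ph_\lambda(\tilde u)\leq\ph_\lambda(u^*)\leq m_\lambda^+$, forcing equalities throughout and thus $1=t_1(u^*)$, i.e.\ $u^*\in\mathcal{N}_\lambda^+$. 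The delicate remaining subcase $1\geq t_2(u^*)$ would be ruled out using the strict inequality $\omega_{u_n}''(1)>0$ (holding because $u_n\in\mathcal{N}_\lambda^+$) in the limit, combined with $\mathcal{N}_\lambda^0=\emptyset$ from Proposition~\ref{prop_emptiness}; this case distinction is the main technical obstacle. Once $u^*\in\mathcal{N}_\lambda^+$ is secured, the equalities in the wlsc estimates force $\|\nabla u_n\|_p\to\|\nabla u^*\|_p$ and $\|\nabla u_n\|_{q,\mu}\to\|\nabla u^*\|_{q,\mu}$, and the $(S_+)$-property of the operator $A$ recalled in the preliminaries upgrades the weak convergence to strong convergence $u_n\to u^*$ in $\WH$; in particular $\ph_\lambda(u^*)=\lim\ph_\lambda(u_n)=m_\lambda^+$, completing the proof.
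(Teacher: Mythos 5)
Your overall strategy (fibering analysis to produce $t_1<t_2$ with $t_1u\in\mathcal{N}_\lambda^+$, $t_2u\in\mathcal{N}_\lambda^-$, then direct minimization of $\ph_\lambda$ over $\mathcal{N}_\lambda^+$ using coercivity and compact embeddings) matches the paper's, and your observations that the minimizing sequence can be taken nonnegative and that $u^*\neq 0$ because $m_\lambda^+<0$ are fine. But two steps have genuine gaps.

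First, you fix one $u_0$ and choose $\hat\lambda^*$ so that $\zeta_{u_0}(t_*(u_0))>\into a(x)|u_0|^{1-\gamma}\,\mathrm{d}x$; this is enough to show $\mathcal{N}_\lambda^\pm\neq\emptyset$, but later you need the fibering dichotomy to hold for the weak limit $u^*$, and you write ``for $\hat\lambda^*$ possibly shrunk further.'' That is circular: $u^*$ is produced by the minimization and depends on $\lambda$, so $\hat\lambda^*$ cannot be readjusted to $u^*$. The paper avoids this by making $\hat\lambda^*$ \emph{uniform in $u$}: using the Sobolev constant $S\|u\|_{p^*}^p\leq\|\nabla u\|_p^p$ and $\into a|u|^{1-\gamma}\,\mathrm{d}x\leq c_6\|u\|_{p^*}^{1-\gamma}$ one gets $\hat\psi_u(\hat t_0)-\lambda\|u\|_r^r\geq(c_8-\lambda c_7)\|u\|_{p^*}^r$ with $c_7,c_8$ independent of $u$, so one single $\hat\lambda^*$ works for every $u\neq 0$; the uniformity is then indispensable when the construction is applied to $u^*$. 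Your proof needs this uniform estimate (or an explicit invocation of Proposition~\ref{prop_emptiness} to rule out the degenerate case $\psi_{u^*}(t_0)=\lambda\|u^*\|_r^r$ and an argument that $\psi_{u^*}(t_0)<\lambda\|u^*\|_r^r$ cannot occur).

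Second, your treatment of the limit passage via the dichotomy ``$1\leq t_1(u^*)$'' versus ``$1\geq t_2(u^*)$'' leaves the second case explicitly unresolved, and the suggested remedy (``rule it out by taking $\omega''_{u_n}(1)>0$ to the limit and invoking $\mathcal N_\lambda^0=\emptyset$'') does not obviously close the gap, because $\omega''_{u_n}(1)>0$ only passes to $\omega''_{u^*}(1)\geq 0$ under weak lower semicontinuity and says nothing about the location of $t_2(u^*)$ relative to $1$. The paper avoids the case split by instead proving the modular convergence Claim $\liminf_n\rho_{\mathcal H}(\nabla u_n)=\rho_{\mathcal H}(\nabla u^*)$ by contradiction: if $\liminf_n\rho_{\mathcal H}(\nabla u_n)>\rho_{\mathcal H}(\nabla u^*)$ then $\liminf_n\omega'_{u_n}(t_1(u^*))>\omega'_{u^*}(t_1(u^*))=0$, while $u_n\in\mathcal N_\lambda^+$ forces $\omega'_{u_n}<0$ on $(0,1)$, so $t_1(u^*)>1$; since $\omega_{u^*}$ decreases on $(0,t_1(u^*)]$, this gives $m_\lambda^+\leq\ph_\lambda(t_1(u^*)u^*)\leq\ph_\lambda(u^*)<m_\lambda^+$, a contradiction. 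Modular convergence then yields strong convergence via Proposition~\ref{proposition_modular_properties}(v), and $u^*\in\mathcal N_\lambda^+$ (rather than $\mathcal N_\lambda^0$) follows from Proposition~\ref{prop_emptiness}. You should replace your unfinished case analysis and the sketched $(S_+)$ step with an argument of this form.
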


\begin{proof}
	Let $u\in \WH\setminus \{0\}$ and consider the function $\hat{\psi}_u\colon (0,+\infty) \to \R$ defined by
	\begin{align*}
		\hat{\psi}_u(t)= t^{p-r}\|\nabla u\|_p^p-t^{-r-\gamma+1}\into a(x)|u|^{1-\gamma} \,\mathrm{d} x.
	\end{align*}
	Since $r-p<r+\gamma-1$ we can find $\hat{t}_0>0$ such that
	\begin{align*}
		\hat{\psi}_u\l(\hat{t}_0\r)=\max_{t>0} \hat{\psi}_u(t).
	\end{align*}
	Thus, $\hat{\psi}'_u(\hat{t}_0)=0$, that is,
	\begin{align*}
		(p-r)\hat{t}_0^{p-r-1}\|\nabla u\|_p^p+(r+\gamma-1)\hat{t}_0^{-r-\gamma}\into a(x)|u|^{1-\gamma} \,\mathrm{d} x=0.
	\end{align*}
	Hence
	\begin{align*}
		\hat{t}_0=\l[\frac{(r+\gamma-1)\into a(x)|u|^{1-\gamma} \,\mathrm{d} x}{(r-p)\|\nabla u\|_p^p}\r]^{\frac{1}{p+\gamma-1}}.
	\end{align*}
	Moreover, we have
	\begin{align}
			\hat{\psi}_u\l(\hat{t}_0\r)
			&=\frac{\Big[(r-p) \|\nabla u\|_p^p \Big]^{\frac{r-p}{p+\gamma-1}}}{\Big[(r+\gamma-1)\into a(x)|u|^{1-\gamma}\,\mathrm{d} x \Big]^{\frac{r-p}{p+\gamma-1}}}\|\nabla u\|_p^p\nonumber\\
			&\quad -\frac{\Big[(r-p) \|\nabla u\|_p^p \Big]^{\frac{r+\gamma-1}{p+\gamma-1}}}{\Big[(r+\gamma-1)\into a(x)|u|^{1-\gamma}\,\mathrm{d} x \Big]^{\frac{r+\gamma-1}{p+\gamma-1}}}\into a(x)|u|^{1-\gamma}\,\mathrm{d} x\nonumber\\
			&=\frac{(r-p)^{\frac{r-p}{p+\gamma-1}} \|\nabla u\|_p^{\frac{p(r+\gamma-1)}{p+\gamma-1}}}{(r+\gamma-1)^{\frac{r-p}{p+\gamma-1}}\Big[\into a(x)|u|^{1-\gamma}\,\mathrm{d} x \Big]^{\frac{r-p}{p+\gamma-1}}}\label{prop_40}\\
			&\quad -\frac{(r-p)^{\frac{r+\gamma-1}{p+\gamma-1}} \|\nabla u\|_p^{\frac{p(r+\gamma-1)}{p+\gamma-1}}}{(r+\gamma-1)^{\frac{r+\gamma-1}{p+\gamma-1}}\Big[\into a(x)|u|^{1-\gamma}\,\mathrm{d} x \Big]^{\frac{r-p}{p+\gamma-1}}}\nonumber\\
			&=\frac{p+\gamma-1}{r-p} \l[\frac{r-p}{r+\gamma-1}\r]^{\frac{r+\gamma-1}{p+\gamma-1}}\frac{\|\nabla u\|_p^{\frac{p(r+\gamma-1)}{p+\gamma-1}}}{\Big[\into a(x)|u|^{1-\gamma}\,\mathrm{d} x \Big]^{\frac{r-p}{p+\gamma-1}}}.\nonumber
	\end{align}
	Let $S$ be the best constant of the Sobolev embedding $\Wpzero{p}\to \Lp{p^*}$, that is,
	\begin{align}\label{prop_41}
		S\|u\|_{p^*}^p \leq \|\nabla u\|_p^p.
	\end{align}
	Moreover, we have
	\begin{align}\label{prop_42}
		\into a(x)|u|^{1-\gamma}\,\mathrm{d} x \leq c_6 \|u\|_{p^*}^{1-\gamma}
	\end{align}
	for some $c_6>0$. Combining \eqref{prop_40}, \eqref{prop_41} and \eqref{prop_42} gives
	\begin{align*}
		&\hat{\psi}_u\l(\hat{t}_0\r)-\lambda \|u\|_r^r\\
		&=\frac{p+\gamma-1}{r-p} \l[\frac{r-p}{r+\gamma-1}\r]^{\frac{r+\gamma-1}{p+\gamma-1}}\frac{\|\nabla u\|_p^{\frac{p(r+\gamma-1)}{p+\gamma-1}}}{\Big[\into a(x)|u|^{1-\gamma}\,\mathrm{d} x \Big]^{\frac{r-p}{p+\gamma-1}}}-\lambda \|u\|_r^r\\
		&\geq \frac{p+\gamma-1}{r-p} \l[\frac{r-p}{r+\gamma-1}\r]^{\frac{r+\gamma-1}{p+\gamma-1}}\frac{S^{\frac{r+\gamma-1}{p+\gamma-1}}\l(\| u\|_{p^*}^p\r)^{\frac{r+\gamma-1}{p+\gamma-1}}}{\l(c_6 \|u\|_{p^*}^{1-\gamma}\r)^{\frac{r-p}{p+\gamma-1}}}-\lambda c_7\|u\|_{p^*}^r\\
		&= \Big [c_8-\lambda c_7\Big] \|u\|_{p^*}^r
	\end{align*}
	for some $c_7, c_8>0$. Therefore, there exists $\hat{\lambda}^* \in (0,\lambda^*]$ independent of $u$ such that
	\begin{align}\label{prop_43}
		\hat{\psi}_u\l(\hat{t}_0\r)-\lambda \|u\|_r^r>0 \quad\text{for all }\lambda \in \l(0,\hat{\lambda}^*\r).
	\end{align}

	Now consider the function $\psi_u\colon (0,+\infty) \to \R$ defined by
	\begin{align*}
		\psi_u(t)= t^{p-r}\|\nabla u\|_p^p+t^{q-r}\|\nabla u\|_{q,\mu}^q-t^{-r-\gamma+1}\into a(x)|u|^{1-\gamma} \,\mathrm{d} x.
	\end{align*}
	Since $r-q<r-p<r+\gamma-1$ we can find $t_0>0$ such that
	\begin{align*}
		\psi_u(t_0)=\max_{t>0} \psi_u(t).
	\end{align*}
	Because of $\psi_u \geq \hat{\psi}_u$ and due to \eqref{prop_43} (note that there the choice of $\hat{\lambda}^*$ is independent of $u$) we can find  $\hat{\lambda}^* \in (0,\lambda^*]$ independent of $u$ such that
	\begin{align*}
		\psi_u\l(t_0\r)-\lambda \|u\|_r^r>0 \quad\text{for all }\lambda \in \l(0,\hat{\lambda}^*\r).
	\end{align*}
	Thus there exist $t_1<t_0<t_2$ such that
	\begin{equation}\label{prop_8}
		\psi_u(t_1)=\lambda \|u\|_r^r=\psi_u(t_2)
		\quad \text{and}\quad 
		\psi'_u(t_2)<0<\psi'_u(t_1),
	\end{equation}
	where
	\begin{align}\label{prop_7b}
		\begin{split}
			\psi'_u(t)
			&=(p-r)t^{p-r-1}\|\nabla u\|_p^p+(q-r)t^{q-r-1}\|\nabla u\|_{q,\mu}^q\\
			&\quad -(-r-\gamma+1)t^{-r-\gamma}\into a(x)|u|^{1-\gamma}\,\mathrm{d} x.
		\end{split}
	\end{align}
	Note that $t_1,t_2$ are the only numbers which fulfill the equality in \eqref{prop_8}.

	Recall that the fibering function $\omega_u\colon[0,+\infty)\to \R$ is given by 
	\begin{align*}
		\omega_u(t)=\ph_\lambda (tu)\quad\text{for all }t\geq 0.
	\end{align*}
	Clearly, $\omega_u \in C^\infty((0,\infty))$. We have
	\begin{align*}
		\omega'_u(t_1)&=t_1^{p-1}\|\nabla u\|_p^p+t_1^{q-1}\|\nabla u\|_{q,\mu}^q -t_1^{-\gamma}\into a(x)|u|^{1-\gamma}\,\mathrm{d} x-\lambda t_1^{r-1}\|u\|_r^r
	\end{align*}
	and
	\begin{align}\label{prop_9}
		\begin{split}
		\omega''_u(t_1)&=(p-1)t_1^{p-2}\|\nabla u\|_p^p+(q-1)t_1^{q-2}\|\nabla u\|_{q,\mu}^q\\ 
		&\quad +\gamma t_1^{-\gamma-1}\into a(x)|u|^{1-\gamma}\,\mathrm{d} x
		-\lambda(r-1) t_1^{r-2}\|u\|_r^r.
		\end{split}
	\end{align}

	From \eqref{prop_8} we obtain
	\begin{align*}
		t_1^{p-r}\|\nabla u\|_p^p +t_1^{q-r}\|\nabla u\|_{q,\mu}^q-t_1^{-r-\gamma+1}\into a(x)|u|^{1-\gamma}\,\mathrm{d} x=\lambda \|u\|_r^r,
	\end{align*}
	which implies by multiplying with $\gamma t_1^{r-2}$ and $-(r-1)t_1^{r-2}$, respectively, that
	\begin{align}\label{prop_10}
		\gamma t_1^{p-2}\|\nabla u\|_p^p +\gamma t_1^{q-2}\|\nabla u\|_{q,\mu}^q-\gamma\lambda t_1^{r-2} \|u\|_r^r= \gamma t_1^{-\gamma-1}\into a(x)|u|^{1-\gamma}\,\mathrm{d} x
	\end{align}
	and
	\begin{align}\label{prop_11}
		\begin{split}
			&-(r-1)t_1^{p-2}\|\nabla u\|_p^p -(r-1)t_1^{q-2}\|\nabla u\|_{q,\mu}^q\\
			&\quad +(r-1)t_1^{-\gamma-1}\into a(x)|u|^{1-\gamma}\,\mathrm{d} x\\
			&=-\lambda(r-1) t_1^{r-2} \|u\|_r^r.
		\end{split}	
	\end{align}
	
	Applying \eqref{prop_10}  in \eqref{prop_9} gives
	\begin{align}\label{prop_12}
		\begin{split}
			\omega''_u(t_1)&=(p+\gamma -1)t_1^{p-2}\|\nabla u\|_p^p+(q+\gamma-1)t_1^{q-2}\|\nabla u\|_{q,\mu}^q\\ 
			&\quad -\lambda(r+\gamma -1) t_1^{r-2}\|u\|_r^r\\
			&= t_1^{-2}\Big[(p+\gamma -1)t_1^{p}\|\nabla u\|_p^p+(q+\gamma-1)t_1^{q}\|\nabla u\|_{q,\mu}^q\\ 
			&\qquad\quad -\lambda(r+\gamma -1) t_1^{r}\|u\|_r^r\Big].
		\end{split}
	\end{align}

	On the other hand, applying \eqref{prop_11}  in \eqref{prop_9} and using the representation in \eqref{prop_7b} leads to
	\begin{equation}\label{prop_13}
		\begin{split}
			\omega''_u(t_1)&=(p-r)t_1^{p-2}\|\nabla u\|_p^p+(q-r)t_1^{q-2}\|\nabla u\|_{q,\mu}^q\\ 
			&\quad +(r+\gamma-1) t_1^{-\gamma-1}\into a(x)|u|^{1-\gamma}\,\mathrm{d} x\\
			&=t^{1-r}_1 \psi'_u(t_1)>0.
		\end{split}
	\end{equation}
	From \eqref{prop_12} and \eqref{prop_13} it follows that
	\begin{align*}
		\begin{split}
			(p+\gamma -1)t_1^{p}\|\nabla u\|_p^p+(q+\gamma-1)t_1^{q}\|\nabla u\|_{q,\mu}^q-\lambda(r+\gamma -1) t_1^{r}\|u\|_r^r>0,
		\end{split}
	\end{align*}
	which implies
	\begin{equation*}
		t_1 u\in \mathcal{N}_\lambda^+ \quad \text{for all } \lambda\in \l(0,\hat{\lambda}^*\r].
	\end{equation*}
	Hence, $\mathcal{N}_\lambda^+\neq \emptyset$.
	
	Using similar arguments for the point $t_2$ (see \eqref{prop_8}), we can show that $\mathcal{N}_\lambda^-\neq \emptyset$. This shows the first assertion of the proposition. Let us now prove the second one.
	
	To this end, let $\{u_n\}_{n\in\N}\subset \mathcal{N}_\lambda^+$ be a minimizing sequence, that is,
	\begin{align}\label{prop_14}
		\ph_\lambda(u_n) \searrow m^+_\lambda <0 \quad\text{as }n\to\infty.
	\end{align}
	Recall that $\mathcal{N}_\lambda^+ \subset \mathcal{N}_\lambda$ and so we conclude from Proposition \ref{prop_coerivity} that $\{u_n\}_{n\in\N}\subset \WH$ is bounded. Therefore, we may assume that
	\begin{align}\label{prop_15}
		u_n\weak u^* \quad\text{in }\WH 
		\quad\text{and}\quad
		u_n\to u^* \quad\text{in }\Lp{r}.
	\end{align}
	From \eqref{prop_14} and \eqref{prop_15} we know that
	\begin{align*}
		\ph_\lambda(u^*) \leq \liminf_{n\to+\infty} \ph_\lambda(u_n)<0=\ph_\lambda(0).
	\end{align*}
	Hence, $u^*\neq 0$.
	
	{\bf Claim:} $\liminf_{n\to+\infty} \rho_{\mathcal{H}}(u_n)=\rho_{\mathcal{H}}(u^*)$
	
	Suppose, by contradiction, that 
	\begin{align*}
		\liminf_{n\to+\infty} \rho_{\mathcal{H}}(u_n)>\rho_{\mathcal{H}}(u^*).
	\end{align*}
	Then, by using \eqref{prop_8}, we have
	\begin{align*}
			&\liminf_{n\to+\infty} \omega'_{u_n}(t_1)\\
			&=\liminf_{n\to+\infty}\l[t_1^{p-1}\|\nabla u_n\|_p^p+t_1^{q-1}\|\nabla u_n\|_{q,\mu}^q -t_1^{-\gamma}\into a(x)|u_n|^{1-\gamma}\,\mathrm{d} x-\lambda t_1^{r-1}\|u_n\|_r^r\r]\\
			&>t_1^{p-1}\|\nabla u^*\|_p^p+t_1^{q-1}\|\nabla u^*\|_{q,\mu}^q -t_1^{-\gamma}\into a(x)|u^*|^{1-\gamma}\,\mathrm{d} x-\lambda t_1^{r-1}\|u^*\|_r^r\\
			&=\omega'_{u^*}(t_1)=t^{r-1}_1\l[\psi_{u^*}(t_1)-\lambda\|u^*\|_r^r\r]=0,
	\end{align*}
	which implies the existence of $n_0\in \N$ such that $\omega'_{u_n}(t_1)>0$ for all $n>n_0$. Recall that $u_n\in \mathcal{N}^+_{\lambda}\subset \mathcal{N}_{\lambda}$ and $\omega'_{u_n}(t)=t^{r-1} \l[\psi_{u_n}(t)-\lambda\|u_n\|_r^r\r]$. Thus we have $\omega'_{u_n}(t)<0$ for all $t\in(0,1)$ and $\omega'_{u_n}(1)=0$. Therefore, $t_1>1$.
	
	Since $\omega_{u^*}$ is decreasing on $(0,t_1]$, we have 
	\begin{align*}
		\ph_{\lambda} \l(t_1 u^*\r) \leq \ph_{\lambda}\l(u^*\r)<m^+_{\lambda}.
	\end{align*}
	Recall that $t_1 u^*\in \nc^+_{\lambda}$. So we obtain that 
	\begin{align*}
		m^+_{\lambda}\leq \ph_{\lambda}\l(t_1 u^*\r)<m^+_{\lambda},
	\end{align*}
	a contradiction. So the Claim is proved.
	
	From the Claim we know that we can find a subsequence (still denoted by $u_n$) such that $\rho_{\eh}\ykh{u_n}\to\rho_{\eh}\ykh{u^*}$. It follows from Proposition \ref{proposition_modular_properties}\textnormal{(v)} that $u_n \to u$ in $\WH$. This implies $\ph_{\lambda}(u_n)\to \ph_{\lambda}(u^*)$, and consequently, $\ph_{\lambda}(u^*)=m^+_{\lambda}$. Since $u_n\in \nc^+_{\lambda}$ for all $n\in \N$, we have
	\begin{align*}
		(p+\gamma-1)\|\nabla u_n\|_p^p+(q+\gamma-1)\|\nabla u_n\|_{q,\mu}^q-\lambda (r+\gamma-1) \|u_n\|_r^r>0.
	\end{align*}
	Letting $n\to+\infty$ gives
	\begin{equation}\label{prop_16}
		(p+\gamma-1)\|\nabla u^*\|_p^p+(q+\gamma-1)\|\nabla u^*\|_{q,\mu}^q-\lambda (r+\gamma-1) \|u^*\|_r^r\geq 0.
	\end{equation}
	Recall that $\lambda\in (0,\hat{\lambda}^*)$ and $\hat{\lambda}^*\leq \lambda^*$. Then, from Proposition \ref{prop_emptiness} we know that equality in \eqref{prop_16} cannot occur. Therefore, we conclude that  $u^*\in \mathcal{N}^+_{\lambda}$. Since we can use $|u^*|$ instead of $u^*$, we may assume that $u^*(x)\geq 0$ for a.\,a.\,$x\in\Omega$ with $u^*\neq 0$.  The proof is finished.
\end{proof}

In what follows, for $\eps>0$, we denote
\begin{align*}
	B_\eps(0)=\l\{u\in\WH\,:\, \|u\|_{1,\mathcal{H},0}<\eps\r\}.
\end{align*}
The next lemma is motivated by Lemma 3 of Sun-Wu-Long \cite{Sun-Wu-Long-2001}. This lemma is helpful in order to show that $u^*$ is a local minimizer of $\varphi_\lambda$ (see Proposition \ref{prop_energy_estimate}) and from this we conclude that $u^*$ is a weak solution of \eqref{problem} (see Proposition \ref{prop_first_weak_solution}).

\begin{lemma}\label{lemma_continuous_function}
	Let hypotheses \textnormal{(H)} be satisfied and let $u\in\mathcal{N}_\lambda^{\pm}$. Then there exist $\eps>0$ and a continuous function $\vartheta\colon B_{\e}(0)\to (0, \infty)$ such that 
	\begin{align*}
		\vartheta(0)=1
		\quad\text{and}\quad
		\vartheta(y)(u+y)\in \mathcal{N}_{\lambda}^{\pm}\quad \text{for all } y\in B_{\eps}(0).
	\end{align*}
\end{lemma}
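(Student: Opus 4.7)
The plan is to encode the Nehari membership condition for $\vartheta(y)(u+y)$ as the scalar equation $\Phi(t,y)=0$ with $t=\vartheta(y)$, and then apply the implicit function theorem. Introduce $\Phi\colon(0,\infty)\times \WH\to\R$ defined by
\begin{align*}
\Phi(t,y)&=t^p\|\nabla(u+y)\|_p^p+t^q\|\nabla(u+y)\|_{q,\mu}^q\\
&\quad-t^{1-\gamma}\into a(x)|u+y|^{1-\gamma}\,\mathrm{d} x-\lambda t^r\|u+y\|_r^r,
\end{align*}
so that for $t>0$ the equation $\Phi(t,y)=0$ is equivalent to $t(u+y)\in\mathcal{N}_\lambda$. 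Since $u\in\mathcal{N}_\lambda$, we have $\Phi(1,0)=0$.

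Next I would verify that $\Phi$ is jointly continuous on $(0,\infty)\times\WH$ and that $\partial_t\Phi$ exists and is jointly continuous. The gradient- and $\Lp{r}$-norm pieces are handled via the embeddings in Proposition \ref{proposition_embeddings} (using $r<p^*$), while for the singular term the elementary bound $|u+y|^{1-\gamma}\le 1+|u+y|$, together with the strong $\Lp{1}$-convergence of $u+y_n$ supplied by Proposition \ref{proposition_embeddings}\textnormal{(iii)} and a generalized dominated convergence argument, yields continuity of $y\mapsto\into a(x)|u+y|^{1-\gamma}\,\mathrm{d} x$. Differentiation in $t$ is straightforward since every power of $t$ appearing is smooth on $(0,\infty)$. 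Substituting the Nehari identity $\into a(x)|u|^{1-\gamma}\,\mathrm{d} x=\|\nabla u\|_p^p+\|\nabla u\|_{q,\mu}^q-\lambda\|u\|_r^r$ into the explicit formula for $\partial_t\Phi(1,0)$ then gives
\begin{align*}
\frac{\partial\Phi}{\partial t}(1,0)=(p+\gamma-1)\|\nabla u\|_p^p+(q+\gamma-1)\|\nabla u\|_{q,\mu}^q-\lambda(r+\gamma-1)\|u\|_r^r,
\end{align*}
which is strictly positive on $\mathcal{N}_\lambda^+$ and strictly negative on $\mathcal{N}_\lambda^-$ by the very definition of these decomposition pieces.

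The Banach-space implicit function theorem, in the form requiring only joint continuity of $\Phi$ together with continuity of $\partial_t\Phi$, now produces $\eps>0$ and a continuous map $\vartheta\colon B_\eps(0)\to(0,\infty)$ with $\vartheta(0)=1$ and $\Phi(\vartheta(y),y)=0$ for every $y\in B_\eps(0)$; after possibly shrinking $\eps$ we also have $u+y\neq 0$, hence $\vartheta(y)(u+y)\in\mathcal{N}_\lambda$. To upgrade the membership to $\mathcal{N}_\lambda^\pm$, observe that
\begin{align*}
y\mapsto (p+\gamma-1)\|\nabla(\vartheta(y)(u+y))\|_p^p+(q+\gamma-1)\|\nabla(\vartheta(y)(u+y))\|_{q,\mu}^q-\lambda(r+\gamma-1)\|\vartheta(y)(u+y)\|_r^r
\end{align*}
is continuous in $y$ and takes a nonzero value of the appropriate sign at $y=0$, so one last shrinking of $\eps$ preserves this sign. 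The point I expect to require most care is the continuity of the singular integral as a function of $y\in\WH$ without any assumed positivity of $u+y$; the combination of $|u+y|^{1-\gamma}\le 1+|u+y|$ with a generalized dominated convergence theorem, applied along subsequences extracted from any $\WH$-convergent sequence, is the cleanest route to that conclusion.
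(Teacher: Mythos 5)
Your proof is correct and follows essentially the same route as the paper's: define a scalar equation encoding membership in $\mathcal{N}_\lambda$, verify that its $t$-derivative at $(1,0)$ equals $(p+\gamma-1)\|\nabla u\|_p^p+(q+\gamma-1)\|\nabla u\|_{q,\mu}^q-\lambda(r+\gamma-1)\|u\|_r^r\neq 0$, and invoke the implicit function theorem, then shrink $\eps$ to preserve the sign condition. The paper uses $\zeta(y,t)=t^{\gamma-1}\Phi(t,y)$ (which pulls the $t$-power off the singular term), but since $\Phi(1,0)=0$ this normalization does not change the derivative at the reference point, so the two choices are interchangeable; your write-up is, if anything, more careful than the paper's in verifying the joint continuity of the singular integral and in noting that $u+y\neq 0$ on a small ball.
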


\begin{proof}
	We show the proof only for $\mathcal{N}_\lambda^{+}$, the proof for $\mathcal{N}_\lambda^{-}$ works in a similar way. To this end, let $\zeta\colon\WH\times (0,\infty)\to\R$ be defined by
	\begin{align*}
		\zeta(y,t)
		&=t^{p+\gamma-1}\|\nabla (u+y)\|_p^p+t^{q+\gamma-1}\|\nabla (u+y)\|_{q,\mu}^q-\into a(x)|u+y|^{1-\gamma}\,\mathrm{d} x\\
		&\quad -\lambda t^{r+\gamma-1} \|u+y\|_r^r\quad \text{for all } y\in \WH.
	\end{align*}
	Since $u\in \mathcal{N}^+_\lambda \subset \mathcal{N}_\lambda$, one has $\zeta(0,1)=0$. Because of $u \in \mathcal{N}^+_\lambda$, it follows that
	\begin{align*}
		\zeta'_t(0,1)=(p+\gamma-1)\|\nabla u\|_p^p+(q+\gamma-1)\|\nabla u\|_{q,\mu}^q -\lambda (r+\gamma-1)\|u\|_r^r>0.
	\end{align*}
	Then, by the implicit function theorem, see, for example, Gasi\'{n}ski-Papageorgiou \cite[p.\,481]{Gasinski-Papageorgiou-2006}, there exist $\eps>0$ and a continuous function $\vartheta\colon B_\eps(0)\to (0,\infty)$ such that
	\begin{align*}
		\vartheta(0)=1\quad\text{and}\quad \vartheta(y)(u+y) \in \mathcal{N}_\lambda \quad\text{for all } y\in B_\eps(0).
	\end{align*}
	Choosing $\eps>0$ small enough, we also have 
	\begin{align*}
		\vartheta(0)=1\quad\text{and}\quad \vartheta(y)(u+y) \in N^{+}_\lambda \quad\text{for all }y\in B_\eps(0).
	\end{align*}
\end{proof}

\begin{proposition}\label{prop_energy_estimate}
	Let hypotheses \textnormal{(H)} be satisfied, let $h\in\WH$ and let $\lambda \in(0, \hat{\lambda}^*]$. Then there exists $b>0$  such that $\ph_{\lambda}(u^*)\leq \ph_{\lambda}(u^*+th)$ for all $t\in [0,b]$.
\end{proposition}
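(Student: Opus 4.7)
My plan is to exploit the non-degeneracy $\omega_{u^*}''(1)>0$ (which encodes $u^*\in \mathcal{N}_\lambda^+$) together with the continuous selection $\vartheta$ from Lemma \ref{lemma_continuous_function}, so as to obtain local strict convexity of the fibering functions $\omega_{u^*+th}(\cdot)$ near $s=1$, uniformly in small $t\ge 0$.

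First I would apply Lemma \ref{lemma_continuous_function} at $u^*$ to produce $\varepsilon>0$ and a continuous $\vartheta\colon B_\varepsilon(0)\to(0,\infty)$ with $\vartheta(0)=1$ and $\vartheta(y)(u^*+y)\in \mathcal{N}_\lambda^+$ for every $y\in B_\varepsilon(0)$. Since $u^*$ minimizes $\ph_\lambda$ on $\mathcal{N}_\lambda^+$, for every $t\ge 0$ with $th\in B_\varepsilon(0)$,
\[
\ph_\lambda\bigl(\vartheta(th)(u^*+th)\bigr)\ge m_\lambda^+=\ph_\lambda(u^*).
\]
Hence it suffices to prove $\ph_\lambda(u^*+th)\ge \ph_\lambda\bigl(\vartheta(th)(u^*+th)\bigr)$ for all sufficiently small $t\ge 0$.

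For this, I would introduce the joint fibering
\[
G(t,s)=\ph_\lambda\bigl(s(u^*+th)\bigr)=\omega_{u^*+th}(s)\quad\text{on }[0,\infty)\times(0,\infty)
\]
and verify that $G$, $\partial_s G$ and $\partial_s^2 G$ are jointly continuous. The only delicate ingredient is the $t$-continuity of $\into a(x)|u^*+th|^{1-\gamma}\,\mathrm{d}x$, which I would obtain from dominated convergence with majorant $\|a\|_\infty(|u^*|+|h|)^{1-\gamma}\in L^1(\Omega)$, using $u^*,h\in \WH\hookrightarrow L^{p^*}(\Omega)$ (Proposition \ref{proposition_embeddings}(ii)) and boundedness of $\Omega$. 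At the base point $(0,1)$ one then has
\[
\partial_s^2 G(0,1)=\omega_{u^*}''(1)>0,
\]
precisely because $u^*\in \mathcal{N}_\lambda^+$. By continuity there exist $\delta,b>0$ such that $\partial_s^2 G>0$ on $[0,b]\times[1-\delta,1+\delta]$, and, using continuity of $\vartheta$ at $0$, $\vartheta(th)\in[1-\delta,1+\delta]$ for all $t\in[0,b]$. Then $G(t,\cdot)$ is strictly convex on $[1-\delta,1+\delta]$ with interior critical point $\vartheta(th)$, hence $G(t,1)\ge G(t,\vartheta(th))$, which combined with the displayed inequality yields the desired conclusion.

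The main obstacle is the joint continuity of $\partial_s^2 G$, and in particular the $t$-continuity of the singular integral above; the remaining summands depend on $t$ only through Lebesgue/Luxemburg norms of $u^*+th$ and cause no difficulty. Once this continuity is in hand, the rest is a soft localization argument around the non-degenerate local minimum of the fibering at $s=1$.
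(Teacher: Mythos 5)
Your proposal is correct and follows the same route as the paper: use Lemma \ref{lemma_continuous_function} to put $\vartheta(th)(u^*+th)$ in $\mathcal{N}_\lambda^+$, compare with the minimum $m_\lambda^+=\ph_\lambda(u^*)$, and then descend from $s=\vartheta(th)$ to $s=1$ along the fibering. Your explicit local-convexity step (joint continuity of $\partial_s^2 G$ plus an interior critical point on $[1-\delta,1+\delta]$) is in fact a cleaner justification of the inequality $\omega_{u^*+th}(\vartheta(t))\le\omega_{u^*+th}(1)$, which the paper asserts from $\omega''_{u^*+th}(1)>0$ with the intermediate reasoning left implicit.
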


\begin{proof}
	We introduce the function $\eta_h\colon [0,+\infty)\to \R$ defined by
	\begin{align}\label{prop_17}
		\begin{split}
			\eta_h(t)
			&=(p-1)\l \|\nabla u^*+t\nabla h\r\|_p^p+(q-1)\|\nabla u^*+t\nabla h\|_{q,\mu}^q\\
			& \quad+\gamma \into a(x)\l|u^*+th \r|^{1-\gamma}\,\mathrm{d} x-\lambda (r-1)\l\| u^*+th \r\|_r^r.
		\end{split}
	\end{align}
	Recall that $u^* \in \mathcal{N}_\lambda^+\subseteq \mathcal{N}_\lambda$, see Proposition \ref{prop_nonemptiness_and_existence}. This implies
	\begin{align}\label{prop_18}
		\gamma \into a(x)\l| u^*\r|^{1-\gamma} \,\mathrm{d} x=\gamma \l\| \nabla u^*\r\|_p^p+\gamma \l\|\nabla u^*\r\|_{q,\mu}^q-\lambda \gamma \l\| u^*\r\|_r^r
	\end{align}
	and
	\begin{align}\label{prop_19}
		(p+\gamma-1)\l\| \nabla u^*\r\|_p^p+(q+\gamma-1) \l\|\nabla u^*\r\|_{q,\mu}^q-\lambda(r+\gamma-1) \l\| u^*\r\|_r^r>0.
	\end{align}
	Combining \eqref{prop_17}, \eqref{prop_18} and \eqref{prop_19} we see that $\eta_h(0)>0$. Since $\eta_h\colon [0,+\infty)\to \R$ is continuous we can find $b_0>0$ such that
	\begin{align*}
		\eta_h(t)>0 \quad\text{for all }t \in [0,b_0].
	\end{align*}
	Lemma \ref{lemma_continuous_function} implies that for every $t \in [0,b_0]$ we can find $\vartheta(t)>0$ such that
	\begin{align}\label{prop_20}
		\vartheta(t)\l(u^*+th\r)\in \mathcal{N}_\lambda^+
		\quad\text{and}\quad
		\vartheta(t) \to 1 \quad\text{as } t\to 0^+.
	\end{align}
	From Proposition \ref{prop_nonemptiness_and_existence} we know that
	\begin{align}\label{ineq-30}
		m_\lambda^+=\ph_\lambda \l(u^*\r) \leq \ph_\lambda \l(\vartheta(t)\l(u^*+th\r)\r)\quad\text{for all } t \in [0,b_0].
	\end{align}

	From $\omega_{u^*}''(1)>0$ and the continuity in $t$, we have $\omega''_{u^* +th}(1)>0$ for $t \in [0,b]$ with $b\in (0,b_0]$. Combining this with \eqref{ineq-30} gives
	\begin{align*}
		m_\lambda^+
		=\ph_\lambda \l(u^*\r) 
		\leq \ph_\lambda \l(\vartheta(t)\l(u^*+th\r)\r)
		=\omega_{u^*+th}(\vartheta(t))
		\leq \omega_{u^* +th}(1)
		=\ph_\lambda \l(u^*+th\r)
	\end{align*}
	for all $t \in [0,b]$.
\end{proof}

The next proposition shows that $\mathcal{N}^+_\lambda$ is a natural constraint for the energy functional $\ph_\lambda$, see  Papageorgiou-R\u{a}dulescu-Repov\v{s} \cite[p.\,425]{Papageorgiou-Radulescu-Repovs-2019}.

\begin{proposition}\label{prop_first_weak_solution}
	Let hypotheses \textnormal{(H)} be satisfied and let $\lambda \in(0, \hat{\lambda}^*]$. Then $u^*$ is a weak solution of problem \eqref{problem} such that $\ph_\lambda(u^*)<0$.
\end{proposition}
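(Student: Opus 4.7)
The plan is to combine the local minimizer property from Proposition~\ref{prop_energy_estimate} with a Fatou-type argument for the singular term, in three steps: first derive a one-sided variational inequality, then deduce strict positivity of $u^*$, then upgrade to the full weak formulation.

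\textbf{Step 1 (one-sided inequality).} Fix $h\in\WH$ with $h\ge 0$. By Proposition~\ref{prop_energy_estimate} the function $t\mapsto\ph_\lambda(u^*+th)$ is minimized on $[0,b]$ at $t=0$. Writing $\ph_\lambda(u^*+th)-\ph_\lambda(u^*)\ge 0$, dividing by $t>0$ and moving the (nonnegative) singular difference quotient to one side gives
\begin{align*}
\frac{1}{(1-\gamma)\,t}\into a(x)\bigl[(u^*+th)^{1-\gamma}-(u^*)^{1-\gamma}\bigr]\,\mathrm{d} x\le S(t),
\end{align*}
where $S(t)$ collects the difference quotients of the smooth part of $\ph_\lambda$ in direction $h$ and converges as $t\to 0^+$ to the finite limit $\into\bigl(|\nabla u^*|^{p-2}\nabla u^*+\mu(x)|\nabla u^*|^{q-2}\nabla u^*\bigr)\cdot\nabla h\,\mathrm{d} x-\lambda\into(u^*)^{r-1}h\,\mathrm{d} x$. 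The left-hand integrand is nonnegative (since $s\mapsto s^{1-\gamma}$ is nondecreasing on $[0,\infty)$) and converges pointwise a.e. to $a(x)(u^*)^{-\gamma}h$. Fatou's lemma then yields both $a(\cdot)(u^*)^{-\gamma}h\in L^1(\Omega)$ and the inequality
\begin{align*}
\into\bigl(|\nabla u^*|^{p-2}\nabla u^*+\mu(x)|\nabla u^*|^{q-2}\nabla u^*\bigr)\cdot\nabla h\,\mathrm{d} x\ge\into a(x)(u^*)^{-\gamma}h\,\mathrm{d} x+\lambda\into(u^*)^{r-1}h\,\mathrm{d} x,\qquad(\star)
\end{align*}
valid for every nonnegative $h\in\WH$.

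\textbf{Step 2 (strict positivity).} Insert into $(\star)$ a test function $h\in\WH$ that is strictly positive a.e.\ in $\Omega$, for instance the positive principal eigenfunction of $-\pl$ on $\Wpzero{p}$, which sits inside $\WH$ by Proposition~\ref{proposition_embeddings}\textnormal{(v)}. The finiteness of $\into a(x)(u^*)^{-\gamma}h\,\mathrm{d} x$ combined with $a>0$ a.e. then forces $u^*>0$ a.e.\ in $\Omega$.

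\textbf{Step 3 (upgrade to equality).} For arbitrary $\psi\in\WH$ and small $\varepsilon>0$, use the nonnegative test function $h_\varepsilon:=(u^*+\varepsilon\psi)^+\in\WH$ in $(\star)$. Splitting each integral according to $\{u^*+\varepsilon\psi>0\}$ versus $C_\varepsilon:=\{u^*+\varepsilon\psi\le 0\}$ and invoking the Nehari identity $u^*\in\mathcal{N}_\lambda$ to cancel the $\varepsilon^0$-order contributions, one is left with $\varepsilon$ times
\begin{align*}
I(\psi):=\into\bigl(|\nabla u^*|^{p-2}\nabla u^*+\mu(x)|\nabla u^*|^{q-2}\nabla u^*\bigr)\cdot\nabla\psi\,\mathrm{d} x-\into a(x)(u^*)^{-\gamma}\psi\,\mathrm{d} x-\lambda\into(u^*)^{r-1}\psi\,\mathrm{d} x,
\end{align*}
plus three remainder integrals supported on $C_\varepsilon$. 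Since $u^*>0$ a.e., $|C_\varepsilon|\to 0$ as $\varepsilon\to 0^+$; moreover $|u^*+\varepsilon\psi|\le\varepsilon|\psi|$ on $C_\varepsilon$, so the $L^1$-integrability of $a(\cdot)(u^*)^{-\gamma}|\psi|$ obtained in Step~1, the integrability of $|\nabla u^*|^{p-1}|\nabla\psi|+\mu(x)|\nabla u^*|^{q-1}|\nabla\psi|$, and absolute continuity of the Lebesgue integral render every remainder $o(\varepsilon)$. Dividing by $\varepsilon$ and passing to the limit $\varepsilon\to 0^+$ gives $I(\psi)\ge 0$, and replacing $\psi$ by $-\psi$ produces $I(\psi)=0$, which is precisely \eqref{weak_solution}. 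Together with $\ph_\lambda(u^*)=m_\lambda^+<0$ from Proposition~\ref{prop_nonemptiness_and_existence}, this finishes the argument.

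The main obstacle I expect is controlling the singular remainder $\int_{C_\varepsilon}a(x)(u^*)^{-\gamma}(u^*+\varepsilon\psi)\,\mathrm{d} x$ in Step~3: the factor $(u^*)^{-\gamma}$ blows up exactly on the set where $u^*$ is small, which is precisely where $C_\varepsilon$ lives. The argument hinges on the pointwise cancellation $|u^*+\varepsilon\psi|\le\varepsilon|\psi|$ available on $C_\varepsilon$ combined with the global integrability extracted in Step~1; without both ingredients the remainder cannot be shown to be $o(\varepsilon)$, and the whole passage from $(\star)$ to equality would fail.
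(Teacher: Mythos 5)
Your proof is close to the paper's and mostly correct, but Step 3 contains a genuine gap. After splitting with $h_\varepsilon=(u^*+\varepsilon\psi)^+$ and cancelling the $\varepsilon^0$-order terms via the Nehari identity $u^*\in\mathcal{N}_\lambda$, the three remainder integrals over $C_\varepsilon=\{u^*+\varepsilon\psi\le 0\}$ are
\begin{align*}
R_1&=-\int_{C_\varepsilon}\bigl(|\nabla u^*|^p+\mu(x)|\nabla u^*|^q\bigr)\,\mathrm{d} x,\\
R_2&=-\varepsilon\int_{C_\varepsilon}\bigl(|\nabla u^*|^{p-2}\nabla u^*+\mu(x)|\nabla u^*|^{q-2}\nabla u^*\bigr)\cdot\nabla\psi\,\mathrm{d} x,\\
R_3&=\int_{C_\varepsilon}\bigl(a(x)(u^*)^{-\gamma}+\lambda(u^*)^{r-1}\bigr)(u^*+\varepsilon\psi)\,\mathrm{d} x.
\end{align*}
The ingredients you list (integrability of $a(u^*)^{-\gamma}|\psi|$ from Step~1 applied to $h=|\psi|$, integrability of $|\nabla u^*|^{p-1}|\nabla\psi|+\mu(x)|\nabla u^*|^{q-1}|\nabla\psi|$, absolute continuity, and the pointwise bound $|u^*+\varepsilon\psi|\le\varepsilon|\psi|$ on $C_\varepsilon$) do show that $R_2$ and $R_3$ are $o(\varepsilon)$. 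They do not show $R_1=o(\varepsilon)$: $R_1$ carries no explicit $\varepsilon$-factor and no pointwise bound by $\varepsilon$ times an integrable function, and absolute continuity only gives $R_1\to 0$. In fact, the co-area heuristic $\int_{\{u^*\le s\}}|\nabla u^*|^p\approx s\int_{\{u^*=0\}}|\nabla u^*|^{p-1}\,\mathrm{d}\sigma$ suggests $R_1$ is generically of order $\varepsilon$, not $o(\varepsilon)$, so the blanket assertion ``every remainder is $o(\varepsilon)$'' is unwarranted. The fix is exactly what the paper does: both $R_1\le 0$ and $R_3\le 0$, so discard them by sign before dividing by $\varepsilon$, obtaining $I(\psi)\ge\int_{C_\varepsilon}\bigl(|\nabla u^*|^{p-2}\nabla u^*+\mu(x)|\nabla u^*|^{q-2}\nabla u^*\bigr)\cdot\nabla\psi\,\mathrm{d} x\to 0$, hence $I(\psi)\ge 0$; you only need a one-sided bound, so only $R_2$ needs the $o(\varepsilon)$ estimate. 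Your Steps 1 and 2 are otherwise valid, and you reorganize the argument slightly compared to the paper: you obtain $u^*>0$ a.e.\ as a corollary of the Fatou bound (finiteness of $\into a(x)(u^*)^{-\gamma}h\,\mathrm{d} x$ for a strictly positive $h\in\WH$ kills the set $\{u^*=0\}$), whereas the paper establishes positivity first by a separate contradiction argument in which the difference quotient is shown to diverge to $-\infty$ on $\{u^*=0\}$, and applies Fatou only afterwards; both orderings work.
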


\begin{proof}
	From Proposition \ref{prop_nonemptiness_and_existence} we know that $u^*\geq 0$ for a.\,a.\,$x\in\Omega$ and $\ph_{\lambda}(u^*)<0$. 
	
	Let us prove that $u^*> 0$ for a.\,a.\,$x\in\Omega$. We argue indirectly and suppose there is a set $D$ with positive measure such that $u^*(x)=0$ for a.\,a.\,$x\in D$. Now let $h\in \WH$ with $h > 0$ and let $t\in (0,b)$, where $b$ is from Proposition \ref{prop_energy_estimate}. Then we have $(u^*+th)^{1-\gamma}>(u^*)^{1-\gamma}$ for a.\,a.\,$x\in D$. Applying this fact along with Proposition \ref{prop_energy_estimate} results in
	\begin{align*}
		0
		&\leq \frac{\ph_\lambda(u^*+th)-\ph_\lambda(u^*)}{t} \\
		&=\frac{1}{p} \frac{\|\nabla (u^*+th)\|_{p}^p-\|\nabla u^*\|_{p}^p}{t} +\frac{1}{q} \frac{\|\nabla (u^*+th)\|_{q,\mu}^q-\|\nabla u^*\|_{q,\mu}^q}{t}\\
		& \quad -\frac{1}{(1-\gamma)t^{\gamma}} \int_D a(x)h^{1-\gamma} \,\mathrm{d} x
		- \frac{1}{1-\gamma}\int_{\Omega\setminus D} a(x)\frac{(u^*+th)^{1-\gamma}-(u^*)^{1-\gamma}}{t}\,\mathrm{d} x\\
		&\quad -\frac{\lambda}{r}
		\frac{\|u^*+th\|_{r}^{r}-\|u^*\|_{r}^{r}}{t}\\
		&<\frac{1}{p} \frac{\|\nabla (u^*+th)\|_{p}^p-\|\nabla u^*\|_{p}^p}{t} +\frac{1}{q} \frac{\|\nabla (u^*+th)\|_{q,\mu}^q-\|\nabla u^*\|_{q,\mu}^q}{t}\\
		& \quad -\frac{1}{(1-\gamma)t^{\gamma}} \int_D a(x)h^{1-\gamma} \,\mathrm{d} x
		-\frac{\lambda}{r}
		\frac{\|u^*+th\|_{r}^{r}-\|u^*\|_{r}^{r}}{t}.
	\end{align*}
	Since $a>0$, see hypothesis \textnormal{(H)(iii)}, we conclude from the estimate above that
	\begin{align*}
		0
		&\leq \frac{\ph_\lambda(u^*+th)-\ph_\lambda(u^*)}{t} \to -\infty \quad \text{as }t\to 0^+.
	\end{align*}
	This is a contradiction and so we have that $u^*(x)>0$ for a.\,a.\,$x\in \Omega$.
	
	Next we prove that
	\begin{equation}\label{def1}
		a(\cdot)(u^*)^{-\gamma} h\in \Lp{1}  \quad \text{for all } h\in\WH
	\end{equation}
	and
	\begin{align}\label{def2}
		\begin{split}
			& \into \Big(|\nabla u^*|^{p-2} \nabla u^*+ \mu(x) |\nabla u^*|^{q-2} \nabla u^*\Big) \cdot \nabla h \,\mathrm{d} x\\
			&\geq \into a(x)(u^*)^{-\gamma} h \,\mathrm{d} x+\lambda \into (u^* )^{r-1}h\,\mathrm{d} x
		\end{split}
	\end{align}
	for all $h\in \WH$ with $h \geq 0$.
	
	To this end, let $h\in \WH$ with $h\geq 0$ and let $\{t_n\}_{n \in\N} \subseteq (0,1]$ be  a decreasing sequence such that $\displaystyle \lim_{n\to \infty} t_n=0$. First note that the functions
	\begin{align*}
		\kappa_n(x)=a(x)\frac{(u^*(x)+t_nh(x))^{1-\gamma}-u^*(x)^{1-\gamma}}{t_n}, \quad n\in\N
	\end{align*}
	are nonnegative and measurable. Furthermore, we have
	\begin{align*}
		\lim_{n\to \infty} \kappa_n(x)=(1-\gamma) a(x)u^*(x)^{-\gamma}h(x)\quad \text{for a.\,a.\,} x\in\Omega
	\end{align*}
	and by Fatou's lemma we get
	\begin{equation}\label{fatou}
		\into a(x) \l (u^*\r)^{-\gamma}h\,\mathrm{d} x\leq \frac{1}{1-\gamma}\liminf_{n\to\infty}\into \kappa_n\,\mathrm{d} x.
	\end{equation}
	Again from Proposition \ref{prop_energy_estimate} we get for $n\in\N$ sufficiently large that
	\begin{align*}
		0
		&\leq \frac{\ph_\lambda(u^*+t_nh)-\ph_\lambda(u^*)}{t_n} \\
		&=\frac{1}{p} \frac{\|\nabla (u^*+t_nh)\|_{p}^p-\|\nabla u^*\|_{p}^p}{t_n} +\frac{1}{q} \frac{\|\nabla (u^*+t_nh)\|_{q,\mu}^q-\|\nabla u^*\|_{q,\mu}^q}{t_n}\\
		&\quad - \frac{1}{1-\gamma}\into \kappa_n\,\mathrm{d} x-\frac{\lambda}{r}
		\frac{\|u^*+t_nh\|_{r}^{r}-\|u^*\|_{r}^{r}}{t_n}.
	\end{align*}
	If we pass to the limit as $n\to \infty$, taking \eqref{fatou} into account, we obtain
	\begin{align*}
			& \into a(x)(u^*)^{-\gamma} h \,\mathrm{d} x\\
			&\leq \into \Big(|\nabla u^*|^{p-2} \nabla u^*+ \mu(x) |\nabla u^*|^{q-2} \nabla u^*\Big) \cdot \nabla h \,\mathrm{d} x -\lambda \into (u^* )^{r-1}h\,\mathrm{d} x.
	\end{align*}
	This shows \eqref{def1}  and \eqref{def2}. We point out that it is sufficient to prove the integrability in \eqref{def1} for nonnegative test functions $h\in \WH$.
	
	In the next step we prove that $u^*$ is a weak solution of \eqref{problem}. Let $v\in\WH$ and let $\eps>0$. We take $h=(u^*+\eps v)_+$ as test function in \eqref{def2} and use  $u^*\in \mathcal{N}_\lambda^+\subset \mathcal{N}_\lambda$ with $u^*\geq 0$. One has
	\begin{align*}
		0& \le \into \ykh{\l|\nabla u^*\r|^{p-2}\nabla u^* + \mu(x)\l|\nabla u^*\r|^{q-2}\nabla u^*} \cdot \nabla (u^*+\eps v)_+ \,\mathrm{d} x\\
		&\quad -\into \l(a(x)\l(u^*\r)^{-\gamma}+\lambda \l(u^*\r)^{r-1}\r)(u^*+\eps v)_+\,\mathrm{d} x\\
		&=\int_{\{u^*+\eps v\geq 0\}} \ykh{\l|\nabla u^*\r|^{p-2}\nabla u^* + \mu(x)\l|\nabla u^*\r|^{q-2}\nabla u^*} \cdot \nabla (u^*+\eps v) \,\mathrm{d} x\\
		&\quad -\int_{\{u^*+\eps v\geq 0\}} \l(a(x)\l(u^*\r)^{-\gamma}+\lambda \l(u^*\r)^{r-1}\r)(u^*+\eps v)\,\mathrm{d} x\\
		&=\into\l(\l|\nabla u^*\r|^{p-2}\nabla u^* + \mu(x)\l|\nabla u^*\r|^{q-2}\nabla u^*\r) \cdot \nabla \l (u^*+\eps v\r) \,\mathrm{d} x\\
		&\quad - \int_{\l\{ u^*+\eps v< 0\r\}}\l(\l|\nabla u^*\r|^{p-2}\nabla u^* + \mu(x)\l|\nabla u^*\r|^{q-2}\nabla u^*\r) \cdot \nabla \l( u^*+\eps v \r) \,\mathrm{d} x\\
		& \quad -\into\l(a(x)\l(u^*\r)^{-\gamma}+\lambda \l(u^*\r)^{r-1}\r) \l( u^*+\eps v\r)\,\mathrm{d} x\\
		&\quad + \int_{\{ u^*+\e v< 0\}} \l(a(x)\l(u^*\r)^{-\gamma}+\lambda \l(u^*\r)^{r-1}\r)\l( u^*+\eps v \r)\,\mathrm{d} x\\
		&=\|\nabla u^*\|_{p}^p+\|\nabla u^*\|_{q,\mu}^q-\into a(x)|u^*|^{1-\gamma}\,\mathrm{d} x-\lambda \|u^*\|_{r}^{r}\\
		&\quad +\eps \into \l ( \l|\nabla u^*\r|^{p-2}\nabla u^* + \mu(x)\l|\nabla u^*\r|^{q-2}\nabla u^*\r) \cdot \nabla  v \,\mathrm{d} x\\
		&\quad -\eps \into  \l( a(x)\l(u^*\r)^{-\gamma}+\lambda \l(u^*\r)^{r-1}\r) v \,\mathrm{d} x\\
		&\quad - \int_{\l\{ u^*+\eps v< 0\r\}}\l(\l|\nabla u^*\r|^{p-2}\nabla u^* + \mu(x)\l|\nabla u^*\r|^{q-2}\nabla u^*\r) \cdot \nabla \l( u^*+\eps v \r) \,\mathrm{d} x\\
		&\quad +\int_{\{ u^*+\e v< 0\}} \l(a(x)\l(u^*\r)^{-\gamma}+\lambda \l(u^*\r)^{r-1}\r)\l( u^*+\eps v \r)\,\mathrm{d} x\\
		&\leq \eps \into \l ( \l|\nabla u^*\r|^{p-2}\nabla u^* + \mu(x)\l|\nabla u^*\r|^{q-2}\nabla u^*\r) \cdot \nabla  v \,\mathrm{d} x\\
		&\quad -\eps \into  \l( a(x)\l(u^*\r)^{-\gamma}+\lambda \l(u^*\r)^{r-1}\r) v \,\mathrm{d} x\\
		&\quad -\eps \int_{\l\{ u^*+\eps v< 0\r\}}\l(\l|\nabla u^*\r|^{p-2}\nabla u^* + \mu(x)\l|\nabla u^*\r|^{q-2}\nabla u^*\r) \cdot \nabla v \,\mathrm{d} x.
	\end{align*}
	Dividing the last inequality with $\eps>0$ and letting $\eps \to 0$, by taking 
	\begin{align*}
		& \int_{\l\{ u^*+\eps v< 0\r\}}\l(\l|\nabla u^*\r|^{p-2}\nabla u^* + \mu(x)\l|\nabla u^*\r|^{q-2}\nabla u^*\r) \cdot \nabla v \,\mathrm{d} x \to 0\quad\text{as }\eps \to 0
	\end{align*}
	into account, we obtain
	\begin{align*}
			& \into \Big(|\nabla u^*|^{p-2} \nabla u^*+ \mu(x) |\nabla u^*|^{q-2} \nabla u^*\Big) \cdot \nabla v \,\mathrm{d} x\\
			&\geq \into a(x)(u^*)^{-\gamma} v \,\mathrm{d} x+\lambda \into (u^* )^{\nu-1}v\,\mathrm{d} x.
	\end{align*}
	Since $v\in\WH$ is arbitrary chosen, equality must hold. It follows that $u^*$ is a weak solution of problem \eqref{problem} such that $\ph_\lambda(u^*)<0$, 
	see Propositions \ref{prop_negative_energy} and \ref{prop_nonemptiness_and_existence}.
\end{proof}

Now we start looking for a second weak solution when the parameter $\lambda>0$ is sufficiently small. To this end, we will use the manifold $\mathcal{N}^-_\lambda$.

\begin{proposition}\label{prop_minimizer_negative_manifold}
	Let hypotheses \textnormal{(H)} be satisfied. Then there exists $\hat{\lambda}_0^* \in(0, \hat{\lambda}^*]$ such that $\ph_\lambda\big|_{\mathcal{N}^-_\lambda} >0$ for all $\lambda \in(0, \hat{\lambda}^*_0]$.
\end{proposition}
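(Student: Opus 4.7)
The plan is to show that on $\mathcal{N}_\lambda^-$ the norm $\|\nabla u\|_p$ blows up as $\lambda \to 0^+$, and that this dominates the negative singular contribution to $\ph_\lambda$.

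First, I would start from the defining inequality of $\mathcal{N}_\lambda^-$, namely
\begin{align*}
(p+\gamma-1)\|\nabla u\|_p^p + (q+\gamma-1)\|\nabla u\|_{q,\mu}^q < \lambda(r+\gamma-1)\|u\|_r^r.
\end{align*}
Since the second term on the left is nonnegative, the Sobolev embedding $\Wpzero{p} \hookrightarrow \Lp{r}$ (valid by $r < p^*$) gives $\|u\|_r \leq C_1 \|\nabla u\|_p$, hence
\begin{align*}
(p+\gamma-1)\|\nabla u\|_p^p \leq \lambda(r+\gamma-1)C_1^r \|\nabla u\|_p^r,
\end{align*}
which, since $r>p$, yields a lower bound of the form $\|\nabla u\|_p \geq c_0 \lambda^{-1/(r-p)}$ for some $c_0>0$ independent of $u$ and $\lambda$. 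This is the key scaling.

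Next, since $u \in \mathcal{N}_\lambda$, I would substitute the Nehari identity $\lambda\|u\|_r^r = \|\nabla u\|_p^p + \|\nabla u\|_{q,\mu}^q - \into a(x)|u|^{1-\gamma}\,\mathrm{d} x$ into $\ph_\lambda(u)$ to rewrite
\begin{align*}
\ph_\lambda(u) = \Big[\tfrac{1}{p}-\tfrac{1}{r}\Big]\|\nabla u\|_p^p + \Big[\tfrac{1}{q}-\tfrac{1}{r}\Big]\|\nabla u\|_{q,\mu}^q - \Big[\tfrac{1}{1-\gamma}-\tfrac{1}{r}\Big]\into a(x)|u|^{1-\gamma}\,\mathrm{d} x.
\end{align*}
Since $p<q<r$ the first two coefficients are positive, and the only negative term is the singular one, which by $a \in \Linf$ and Proposition~\ref{proposition_embeddings}(ii) is bounded above by $c_2 \|\nabla u\|_p^{1-\gamma}$. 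Discarding the nonnegative $q$-term, I obtain
\begin{align*}
\ph_\lambda(u) \geq \|\nabla u\|_p^{1-\gamma} \Big(\big[\tfrac{1}{p}-\tfrac{1}{r}\big]\|\nabla u\|_p^{p+\gamma-1} - c_3\Big)
\end{align*}
for some $c_3>0$ independent of $u$ and $\lambda$.

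Finally, combining with the lower bound on $\|\nabla u\|_p$ gives
\begin{align*}
\|\nabla u\|_p^{p+\gamma-1} \geq c_0^{p+\gamma-1} \lambda^{-(p+\gamma-1)/(r-p)} \to +\infty \quad \text{as } \lambda \to 0^+,
\end{align*}
so the bracket is strictly positive once $\lambda$ is small enough, uniformly in $u\in\mathcal{N}_\lambda^-$. Choosing $\hat{\lambda}_0^* \in (0,\hat{\lambda}^*]$ accordingly yields $\ph_\lambda(u)>0$ for all $u \in \mathcal{N}_\lambda^-$ and all $\lambda \in (0,\hat{\lambda}_0^*]$. The mild obstacle is making sure the chain of inequalities, in particular the Sobolev estimate on the singular term and the dropping of the $q$-term, produces constants that are genuinely independent of $u$ and $\lambda$; this is straightforward because $a$ is bounded and the embedding constant depends only on $\Omega, p, r$.
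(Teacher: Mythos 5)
Your proof is correct, and it rests on the same key observation as the paper's: the defining inequality of $\mathcal{N}_\lambda^-$, combined with the Sobolev embedding, forces $\|\nabla u\|_p\gtrsim\lambda^{-1/(r-p)}$ uniformly on $\mathcal{N}_\lambda^-$, which blows up as $\lambda\to0^+$. The route you take after that, however, differs from the paper's. The paper argues by contradiction: it assumes $\ph_\lambda(u)\le 0$ for some $u\in\mathcal{N}_\lambda^-$, substitutes the Nehari identity so as to eliminate the $q$-modular term $\|\nabla u\|_{q,\mu}^q$, and derives an \emph{upper} bound $\|u\|_r\lesssim\lambda^{-1/(r+\gamma-1)}$, which conflicts with the \emph{lower} bound $\|u\|_r\gtrsim\lambda^{-1/(r-p)}$ for small $\lambda$ because $r-p<r+\gamma-1$ (equivalently $p+\gamma-1>0$). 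You instead eliminate the $\lambda\|u\|_r^r$-term via the Nehari identity, discard the nonnegative $q$-modular contribution, bound the singular integral by $c\,\|\nabla u\|_p^{1-\gamma}$ using Sobolev, and factor to obtain a direct, uniform lower bound
\begin{align*}
\ph_\lambda(u)\geq\|\nabla u\|_p^{1-\gamma}\Bigl(\bigl[\tfrac1p-\tfrac1r\bigr]\|\nabla u\|_p^{p+\gamma-1}-c_3\Bigr),
\end{align*}
which is positive once $\lambda$ is small because the exponent $p+\gamma-1$ is positive. Both proofs hinge on exactly the same exponent comparison; yours is arguably cleaner and slightly more informative, since it gives a quantitative positive lower bound for $\ph_\lambda\big|_{\mathcal{N}_\lambda^-}$ rather than merely positivity. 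One tiny point to make explicit: in deriving the scaling lower bound you divide by $\|\nabla u\|_p^p$, which is legitimate because $u\neq0$ in $\WH$ and the Poincaré-type norm equivalence guarantees $\|\nabla u\|_p>0$.
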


\begin{proof}
	From Proposition \ref{prop_nonemptiness_and_existence} we know that $\mathcal{N}_\lambda^-\neq \emptyset$. Let $u \in \mathcal{N}_\lambda^-$. By the definition of $\mathcal{N}_\lambda^-$ and the embedding $\Wpzero{p}\to \Lp{r}$ we have
	\begin{align*}
			\lambda (r+\gamma-1) \|u\|_r^r&
			> (p+\gamma-1)\|\nabla u\|_p^p+(q+\gamma-1)\|\nabla u\|_{q,\mu}^q\\
			&\geq (p+\gamma-1) \|\nabla u\|_p^p\\
			& \geq (p+\gamma-1)c_9^p \|u\|_{r}^p
	\end{align*}
	for some $c_9>0$. Therefore
	\begin{equation}\label{prop_23}
		\|u\|_r\geq
		\displaystyle \l[\frac{c_9^p(p+\gamma-1)}{\lambda(r+\gamma-1)}\r]^{\frac{1}{r-p}}.
	\end{equation}

	Arguing by contradiction and suppose that the assertion of the proposition is not true. Then we can find $u \in \mathcal{N}_\lambda^-$ such that $\ph_\lambda(u)\leq 0$, that is,
	\begin{equation}\label{prop_24}
		\frac{1}{p}\|\nabla u\|_p^p +\frac{1}{q} \|\nabla u\|_{q,\mu}^q-\frac{1}{1-\gamma} \into a(x)|u|^{1-\gamma}\,\mathrm{d} x-\frac{\lambda}{r}\|u\|_r^r\leq 0.
	\end{equation}
	Since $\mathcal{N}_\lambda^-\subseteq \mathcal{N}_\lambda$ we know that
	\begin{equation}\label{prop_25}
		\frac{1}{q}\|\nabla u\|_{q,\mu}^q =\frac{1}{q}\into a(x)|u|^{1-\gamma}\,\mathrm{d} x+\frac{\lambda}{q} \|u\|_r^r -\frac{1}{q}\|\nabla u\|_p^p.
	\end{equation}
	Using \eqref{prop_25} in \eqref{prop_24} gives
	\begin{align*}
		\l(\frac{1}{p}-\frac{1}{q}\r)\|\nabla u\|_{p}^p+
		\l(\frac{1}{q}-\frac{1}{1-\gamma}\r)\into a(x)|u|^{1-\gamma}\,\mathrm{d} x+\lambda \l(\frac{1}{q}-\frac{1}{r}\r)\|u\|_r^r\leq 0.
	\end{align*}
	This yields
	\begin{align*}
		\lambda \frac{r-q}{qr}\|u\|_r^r\leq \frac{q+\gamma -1}{q(1-\gamma)}\into a(x) |u|^{1-\gamma}\,\mathrm{d} x \leq \frac{q+\gamma -1}{q(1-\gamma)}c_{10}\|u\|_r^{1-\gamma}
	\end{align*}
	for some $c_{10}>0$. Therefore,
	\begin{equation}\label{prop_26}
		\|u\|_r \leq c_{11} \l(\frac{1}{\lambda}\r)^{\frac{1}{r+\gamma-1}}
	\end{equation}
	for some $c_{11}>0$. Now we use \eqref{prop_26}  in \eqref{prop_23} in order to obtain
	\begin{align*}
		c_{12}\l(\frac{1}{\lambda}\r)^{\frac{1}{r-p}}
		\leq c_{11} \l(\frac{1}{\lambda}\r)^{\frac{1}{r+\gamma-1}}\quad\text{with}
		\quad \displaystyle c_{12}=\l[\frac{c_9^p(p+\gamma-1)}{r+\gamma-1}\r]^{\frac{1}{r-p}}>0.
	\end{align*}
	Consequently
	\begin{align*}
		0<\frac{c_{12}}{c_{11}} \leq \lambda^{\frac{1}{r-p}-\frac{1}{r+\gamma-1}}=\lambda^{\frac{p+\gamma-1}{(r-p)(r+\gamma-1)}}\to 0\quad\text{as}\quad \lambda\to 0^+,
	\end{align*}
	since $1<p<r$ and $\gamma \in (0,1)$, a contradiction. Thus, we can find $\hat{\lambda}_0^* \in(0, \hat{\lambda}^*]$ such that $\ph_\lambda\big|_{\mathcal{N}^-_\lambda} > 0$ for all $\lambda \in(0, \hat{\lambda}^*_0]$.
\end{proof}

Now we minimize $\ph_\lambda$ on the manifold $\mathcal{N}_\lambda^-$.

\begin{proposition}\label{prop_minimize_negative}
	Let hypotheses \textnormal{(H)} be satisfied and let $\lambda \in(0, \hat{\lambda}^*_0]$. Then there exists $v^*\in\mathcal{N}_\lambda^-$ with $v^*\geq 0$ such that
	\begin{align*}
		m_\lambda^-=\inf_{\mathcal{N}_\lambda^-}\ph_\lambda=\ph_\lambda\l(v^*\r)>0.
	\end{align*}
\end{proposition}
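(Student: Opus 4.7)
The plan is to apply the direct method on the constraint set $\mathcal{N}_\lambda^-$, using the a priori lower bound from the proof of Proposition \ref{prop_minimizer_negative_manifold} to prevent the weak limit from being trivial, and then upgrading weak to strong convergence by a fibering-based contradiction analogous to (but requiring different structural input than) the one in Proposition \ref{prop_nonemptiness_and_existence}.

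First I pick a minimizing sequence $\{v_n\} \subset \mathcal{N}_\lambda^-$ with $\ph_\lambda(v_n) \searrow m_\lambda^- > 0$, strict positivity coming from Proposition \ref{prop_minimizer_negative_manifold}. By Proposition \ref{prop_coerivity} the sequence is bounded in $\WH$, so up to a subsequence $v_n \weak v^*$ in $\WH$, $v_n \to v^*$ in $\Lp{r}$ (by Proposition \ref{proposition_embeddings}\textnormal{(iii)} since $r < p^*$) and a.e.\ in $\Omega$. The uniform bound $\|v_n\|_r \geq c > 0$ provided by \eqref{prop_23} inside the proof of Proposition \ref{prop_minimizer_negative_manifold} passes to the $L^r$-limit, so $v^* \not\equiv 0$. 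Applying the fibering analysis of Proposition \ref{prop_nonemptiness_and_existence} to this nonzero $v^*$ yields a $t^* := t_2(v^*) > 0$ with $t^* v^* \in \mathcal{N}_\lambda^-$.

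The crucial structural observation is that for any $v \in \mathcal{N}_\lambda^-$ with $\ph_\lambda(v) > 0$, the fibering function $\omega_v$ satisfies $\omega_v(0) = 0$ and $\omega_v'(t) \to -\infty$ as $t \to 0^+$, hence decreases on $(0,t_1(v)]$ to a negative local minimum, increases on $[t_1(v),1]$ to the positive local maximum $\omega_v(1) = \ph_\lambda(v)$, and then decreases to $-\infty$. Consequently, $t = 1$ is the \emph{global} maximizer of $\omega_{v_n}$ on all of $(0,\infty)$ for large $n$, which delivers the key inequality
\begin{equation*}
\omega_{v_n}(t^*) \leq \omega_{v_n}(1) = \ph_\lambda(v_n) \quad \text{for large } n.
\end{equation*}
Suppose, for contradiction, that $\liminf_{n\to\infty} \rho_\mathcal{H}(\nabla v_n) > \rho_\mathcal{H}(\nabla v^*)$. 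Pass to a further subsequence so that $\|\nabla v_n\|_p^p \to A \geq A^* := \|\nabla v^*\|_p^p$ and $\|\nabla v_n\|_{q,\mu}^q \to B \geq B^* := \|\nabla v^*\|_{q,\mu}^q$ with $A + B > A^* + B^*$ (weak lower semicontinuity of each convex integrand). Compact embedding plus a Vitali-type argument (using $r > 1-\gamma$ and $a \in \Linf$) gives $\|v_n\|_r^r \to \|v^*\|_r^r$ and $\into a(x)|v_n|^{1-\gamma} \,\mathrm{d} x \to \into a(x)|v^*|^{1-\gamma} \,\mathrm{d} x$, so
\begin{equation*}
\omega_{v_n}(t^*) \to \omega_{v^*}(t^*) + \frac{A-A^*}{p}(t^*)^p + \frac{B-B^*}{q}(t^*)^q > \omega_{v^*}(t^*) = \ph_\lambda(t^* v^*) \geq m_\lambda^-,
\end{equation*}
where the last inequality uses $t^* v^* \in \mathcal{N}_\lambda^-$. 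This contradicts the previous estimate after letting $n\to\infty$.

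Therefore $\rho_\mathcal{H}(\nabla v_n) \to \rho_\mathcal{H}(\nabla v^*)$, and combining Proposition \ref{proposition_modular_properties}\textnormal{(v)} with the uniform convexity of $\WH$ upgrades this to $v_n \to v^*$ strongly in $\WH$. Passing to the limit in the defining (in)equalities of $\mathcal{N}_\lambda^-$ gives $v^* \in \mathcal{N}_\lambda^- \cup \mathcal{N}_\lambda^0$, and Proposition \ref{prop_emptiness} (valid since $\hat{\lambda}_0^* \leq \hat{\lambda}^* \leq \lambda^*$) excludes $\mathcal{N}_\lambda^0$. Hence $v^* \in \mathcal{N}_\lambda^-$ with $\ph_\lambda(v^*) = m_\lambda^-$, and replacing $v^*$ by $|v^*|$ (which preserves both the energy and the membership) produces the required nonnegative minimizer; the positivity $m_\lambda^- > 0$ is Proposition \ref{prop_minimizer_negative_manifold}. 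The main obstacle is the global-shape analysis of $\omega_{v_n}$ that upgrades the local-max property (automatic from $v_n \in \mathcal{N}_\lambda^-$) to a genuinely global bound; without it, testing the weak-lsc gap against $t^* = t_2(v^*)$ would not close the contradiction.
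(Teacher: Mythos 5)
Your proof is correct and follows essentially the same route as the paper's: extract a bounded minimizing sequence, pass to a weak limit $v^*$, use \eqref{prop_23} to rule out triviality, employ the point $t_2(v^*)$ to produce $t_2v^*\in\mathcal{N}_\lambda^-$, derive a contradiction from the assumed modular gap via the global-maximum property $\omega_{v_n}(t)\le\omega_{v_n}(1)$, upgrade to strong convergence, and finish via Proposition \ref{prop_emptiness} and the positivity from Proposition \ref{prop_minimizer_negative_manifold}. The only differences are expository: you spell out the shape analysis behind the global-max claim (which the paper states tersely as a consequence of $\omega_{v_n}''(1)<0$), you work with $\rho_\mathcal{H}(\nabla v_n)$ rather than the paper's shorthand $\rho_\mathcal{H}(v_n)$, and you mention uniform convexity when passing from modular to norm convergence — all sound refinements of the same argument.
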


\begin{proof}
	Let $\{v_n\}_{n\in\N}\subset \mathcal{N}_\lambda^- \subset \mathcal{N}_\lambda$ be a minimizing sequence. Since $\mathcal{N}_\lambda^-\subset \mathcal{N}_\lambda$, we know that $\{v_n\}_{n\in\N}\subset \WH$ is bounded, see Proposition \ref{prop_coerivity}. We may assume that
	\begin{align*}
		v_n\weak v^* \quad\text{in }\WH
		\quad\text{and}\quad v_n\to v^*\quad\text{in }\Lp{r}.
	\end{align*}
	Note that $v^* \neq 0$  by \eqref{prop_23}. 
	Now we will use the point $t_2>0$ (see \eqref{prop_8}) for which we have
	\begin{align*}
		\psi_{v^*}\l(t_2\r)=\lambda \l\|v^*\r\|_{r}^{r}\quad\text{and}\quad \psi'_{v^*}\l(t_2\r)<0.
	\end{align*}
	In the proof of Proposition \ref{prop_nonemptiness_and_existence} we showed that $t_2 v^* \in \mathcal{N}_\lambda^-$.
	
	Next we want to show that $\rho_{\mathcal{H}}(v_n)\to \rho_{\mathcal{H}}(v^*)$ as $n\to \infty$ for a subsequence (still denoted by $v_n$). Let us suppose this is not the case, then we have as in the proof of Proposition \ref{prop_nonemptiness_and_existence} that
	\begin{align*}
		\ph_\lambda(t_2 v^*)< \lim_{n\to\infty} \ph_\lambda(t_2 v_n).
	\end{align*}
	We know that $\ph_\lambda(t_2 v_n) \leq \ph_\lambda(v_n)$ since it is the global maximum because of $\omega_{v_n}''(1)<0$. Using this and  $t_2 v^* \in \mathcal{N}_\lambda^-$, we get
	\begin{align*}
		m_\lambda^- \leq \ph_\lambda(t_2 v^*) < m^-_\lambda,
	\end{align*}
	which is a contradiction. Hence we have $\lim_{n\to+\infty} \rho_{\mathcal{H}}(v_n)=\rho_{\mathcal{H}}(v^*)$ for a subsequence and so Proposition \ref{proposition_modular_properties}\textnormal{(v)} implies $v_n \to v^*$ in $\WH$. The continuity of $\ph_\lambda$ then gives  $\ph_{\lambda}(v_n)\to \ph_{\lambda}(v^*)$, thus, $\ph_{\lambda}(v^*)=m^-_{\lambda}$.

	Since $v_n\in \nc^-_{\lambda}$ for all $n\in \N$, we have
	\begin{align*}
		(p+\gamma-1)\|\nabla v_n\|_p^p+(q+\gamma-1)\|\nabla v_n\|_{q,\mu}^q-\lambda (r+\gamma-1) \|v_n\|_r^r<0.
	\end{align*}
	If we pass to the limit as $n\to+\infty$ we obtain
	\begin{equation}\label{prop_166}
		(p+\gamma-1)\|\nabla v^*\|_p^p+(q+\gamma-1)\|\nabla v^*\|_{q,\mu}^q-\lambda (r+\gamma-1) \|v^*\|_r^r\leq 0.
	\end{equation}
	Recall that $\lambda\in (0,\hat{\lambda}^*)$ and $\hat{\lambda}^*\leq \lambda^*$. Applying Proposition \ref{prop_emptiness} we see that equality in \eqref{prop_166} cannot happen. Hence, $v^*\in \mathcal{N}^-_{\lambda}$. Since the treatment also works for $|v^*|$ instead of $v^*$, we may assume that $v^*(x)\geq 0$ for a.\,a.\,$x\in\Omega$ such that $v^*\neq 0$. Proposition \ref{prop_minimizer_negative_manifold} finally shows that $m_\lambda^->0$.
\end{proof}

Now we have a second weak solution of problem \eqref{problem}.

\begin{proposition}\label{prop_seond_weak_solution}
	Let hypotheses \textnormal{(H)} be satisfied and let $\lambda \in(0, \hat{\lambda}^*_0]$. Then $v^*$ is a weak solution of problem \eqref{problem} such that $\ph_\lambda(v^*)>0$.
\end{proposition}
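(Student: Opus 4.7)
The proof mirrors that of Proposition \ref{prop_first_weak_solution}, with one crucial change: Proposition \ref{prop_energy_estimate} is unavailable because for $v^* \in \mathcal{N}_\lambda^-$ the point $s=1$ is a local \emph{maximum}, not a minimum, of the fibering function $\omega_{v^*}$, so $\ph_\lambda(v^*) \leq \ph_\lambda(v^*+th)$ need not hold. The substitute is provided by Lemma \ref{lemma_continuous_function}: for any $h \in \WH$ and small $t > 0$, the competitor $u_t := \vartheta(th)(v^*+th)$ lies in $\mathcal{N}_\lambda^-$, where $\vartheta(0)=1$. The constrained minimality of $v^*$ (Proposition \ref{prop_minimize_negative}) then yields
\begin{align*}
    \frac{\ph_\lambda(u_t)-\ph_\lambda(v^*)}{t} \geq 0 \quad \text{for all small } t>0,
\end{align*}
and this inequality will play the role of Proposition \ref{prop_energy_estimate} throughout.

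Building on this, I would first show strict positivity $v^*(x)>0$ a.e.\ via contradiction: if $v^*$ vanishes on a set $D$ of positive measure and $h \in \WH$ is chosen with $h>0$, then splitting the singular integral in $\ph_\lambda(u_t)-\ph_\lambda(v^*)$ over $D$ and $\Omega \setminus D$ produces a $D$-term of order $-\vartheta(th)^{1-\gamma}(1-\gamma)^{-1} t^{-\gamma} \int_D a(x)h^{1-\gamma}\,\mathrm{d} x \to -\infty$ while every other contribution stays bounded, a contradiction. Next, for the integral inequality I would fix $h \geq 0$ in $\WH$ and $t_n \searrow 0^+$, introduce $\kappa_n := a(x)[(v^*+t_nh)^{1-\gamma}-(v^*)^{1-\gamma}]/t_n \geq 0$, and expand the difference quotient $t_n^{-1}[\ph_\lambda(u_{t_n})-\ph_\lambda(v^*)] \geq 0$. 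The smooth terms converge to expressions of the form $\vartheta'(0)\|\nabla v^*\|_p^p + \into |\nabla v^*|^{p-2}\nabla v^* \cdot \nabla h\,\mathrm{d} x$ and analogous $q$- and $r$-contributions (with $\vartheta \in C^1$ near $0$ by the implicit function theorem), while Fatou's lemma gives $\liminf_n \into \kappa_n \,\mathrm{d} x \geq (1-\gamma)\into a(x)(v^*)^{-\gamma}h\,\mathrm{d} x$. The Nehari identity $\|\nabla v^*\|_p^p+\|\nabla v^*\|_{q,\mu}^q-\lambda\|v^*\|_r^r=\into a(x)(v^*)^{1-\gamma}\,\mathrm{d} x$ eliminates every $\vartheta'(0)$-contribution, leaving
\begin{align*}
    \into a(x)(v^*)^{-\gamma}h\,\mathrm{d} x + \lambda\into(v^*)^{r-1}h\,\mathrm{d} x \leq \into \big(|\nabla v^*|^{p-2}\nabla v^* + \mu(x)|\nabla v^*|^{q-2}\nabla v^*\big)\cdot \nabla h\,\mathrm{d} x,
\end{align*}
together with $a(\cdot)(v^*)^{-\gamma}h \in \Lp{1}$.

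Finally, to upgrade to equality I would test the above inequality with $h=(v^*+\eps w)_+ \in \WH$ for arbitrary $w \in \WH$ and $\eps>0$, exactly as in the last part of Proposition \ref{prop_first_weak_solution}: split each integral over $\{v^*+\eps w \geq 0\}$ and its complement, use $v^* \in \mathcal{N}_\lambda$ to collapse the main block, divide by $\eps$, and let $\eps \to 0^+$; since $|\{v^*+\eps w<0\}| \to 0$ and the operator is absolutely continuous, the remainder integrals vanish in the limit, producing the reverse inequality for every $w$, hence equality. This shows $v^*$ is a weak solution of \eqref{problem}, and $\ph_\lambda(v^*)=m_\lambda^->0$ by Proposition \ref{prop_minimizer_negative_manifold}. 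The main obstacle is the expansion step: unlike the $u^*$ argument, the competitor involves the scaling $\vartheta(th)$ \emph{together with} the translation, so one must track the $\vartheta'(0)$-contribution in every term (including the singular one, where it enters through $\vartheta(t_n)^{1-\gamma}$) and verify their collective cancellation via the Nehari identity before Fatou's lemma can be applied to deliver the desired inequality.
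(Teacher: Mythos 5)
Your overall framework is right: Proposition \ref{prop_energy_estimate} is unavailable for $v^*$ because $t=1$ is a local maximum of $\omega_{v^*}$, and the substitute must come from Lemma \ref{lemma_continuous_function} together with the constrained minimality $\ph_\lambda(v^*)=m_\lambda^-\le\ph_\lambda(\vartheta(t)(v^*+th))$. However, your route then diverges from the paper's, and that divergence opens a genuine gap.

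You propose to expand $t^{-1}[\ph_\lambda(\vartheta(t)(v^*+th))-\ph_\lambda(v^*)]$ directly and to track the $\vartheta'(0)$-contributions, claiming $\vartheta\in C^1$ near $0$ ``by the implicit function theorem.'' That claim is not available here. Lemma \ref{lemma_continuous_function} only asserts that $\vartheta$ is \emph{continuous}, and indeed the function $\zeta(y,t)$ used in its proof contains the singular term $\into a(x)|u+y|^{1-\gamma}\,\mathrm{d} x$, which in general is not Fr\'echet differentiable in $y$ on $\WH$ (its formal derivative involves $a(x)|u+y|^{-\gamma}$, which need not be integrable). So the implicit function theorem yields a continuous $\vartheta$ only, and the coefficient $\vartheta'(0)$ that your expansion relies on may not exist. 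Without it, the first-order terms $\frac{\vartheta(t)^p-1}{t}\|\nabla v^*\|_p^p$, etc., need not have a limit, and the ``collective cancellation via the Nehari identity'' you invoke cannot be set up.

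The paper sidesteps this entirely with a different key observation: since $v^*\in\mathcal{N}_\lambda^-$, the point $s=1$ is the \emph{global maximum} of $\omega_{v^*}$ on $(0,\infty)$, hence $\ph_\lambda(v^*)=\omega_{v^*}(1)\ge\omega_{v^*}(\vartheta(t))=\ph_\lambda(\vartheta(t)v^*)$. Inserting this gives
\begin{align*}
0\le\frac{\ph_\lambda(\vartheta(t)(v^*+th))-\ph_\lambda(v^*)}{t}\le\frac{\ph_\lambda(\vartheta(t)(v^*+th))-\ph_\lambda(\vartheta(t)v^*)}{t},
\end{align*}
and in the right-hand quotient the scalar $\vartheta(t)$ appears as a \emph{common multiplicative prefactor} in each term, so only its limit $\vartheta(t)\to1$ is needed; no derivative of $\vartheta$ enters. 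With this substitution the positivity argument, the Fatou step, and the passage to the weak-solution inequality go through exactly as for $u^*$. You should replace your $\vartheta'(0)$-expansion by this maximality comparison; otherwise the argument does not close.
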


\begin{proof}
	Following the proof of Proposition \ref{prop_energy_estimate} replacing $u^*$ by $v^*$ in the definition of $\eta_h$ by using Lemma \ref{lemma_continuous_function} we are able to show that for every $t \in [0,b_0]$ there exists $\vartheta(t)>0$ such that
	\begin{align*}
		\vartheta(t)\l(v^*+th\r)\in \mathcal{N}_\lambda^-
		\quad\text{and}\quad
		\vartheta(t) \to 1 \quad\text{as } t\to 0^+,
	\end{align*}
	see also \eqref{prop_20}. Taking Proposition \ref{prop_minimize_negative} into account we have that
	\begin{align}\label{prop_200}
		m_\lambda^-=\ph_\lambda \l(v^*\r) \leq \ph_\lambda \l(\vartheta(t)\l(v^*+th\r)\r)\quad\text{for all } t \in [0,b_0].
	\end{align}
	
	Let us now show that  $v^*(x)> 0$ for a.\,a.\,$x\in\Omega$. As for $u^*$, let us suppose there exists a set $E$ with positive measure such that $v^*=0$ in $E$. Taking $h\in \WH$ with $h > 0$ and $t\in (0,b_0)$, we know that $(\vartheta(t)(v^*+th))^{1-\gamma}>(\vartheta(t)v^*)^{1-\gamma}$ a.\,e.\,in $\Omega\setminus E$. Note that $\omega_{v^*}(1)$ is the global maximum which implies $\omega_{v^*}(1) \geq \omega_{v^*}(\vartheta(t))$. Using this and  \eqref{prop_200} it follows that
	\begin{align*}
		0
		&\leq \frac{\ph_\lambda(\vartheta(t)(v^*+th))-\ph_\lambda(v^*)}{t} \\
		&\leq \frac{\ph_\lambda(\vartheta(t)(v^*+th))-\ph_\lambda(\vartheta(t)v^*)}{t}\\
		&=\frac{1}{p} \frac{\|\nabla (\vartheta(t)(v^*+th))\|_{p}^p-\|\nabla (\vartheta(t)v^*)\|_{p}^p}{t}\\ 
		&\quad +\frac{1}{q} \frac{\|\nabla (\vartheta(t)(v^*+th))\|_{q,\mu}^q-\|\nabla (\vartheta(t)v^*)\|_{q,\mu}^q}{t}-\frac{\vartheta(t)^{1-\gamma}}{(1-\gamma)t^{\gamma}} \int_E a(x)h^{1-\gamma} \,\mathrm{d} x\\
		&\quad - \frac{1}{1-\gamma}\int_{\Omega\setminus E} a(x)\frac{(\vartheta(t)(v^*+th))^{1-\gamma}-(\vartheta(t)v^*)^{1-\gamma}}{t}\,\mathrm{d} x\\
		&\quad -\frac{\lambda}{r}
		\frac{\|\vartheta(t)(v^*+th)\|_{r}^{r}-\|\vartheta(t)v^*\|_{r}^{r}}{t}\\
		&<\frac{1}{p} \frac{\|\nabla (\vartheta(t)(v^*+th))\|_{p}^p-\|\nabla (\vartheta(t)v^*)\|_{p}^p}{t}\\ 
		&\quad +\frac{1}{q} \frac{\|\nabla (\vartheta(t)(v^*+th))\|_{q,\mu}^q-\|\nabla (\vartheta(t)v^*)\|_{q,\mu}^q}{t}-\frac{\vartheta(t)^{1-\gamma}}{(1-\gamma)t^{\gamma}} \int_E a(x)h^{1-\gamma} \,\mathrm{d} x\\
		&\quad -\frac{\lambda}{r}
		\frac{\|\vartheta(t)(v^*+th)\|_{r}^{r}-\|\vartheta(t)v^*\|_{r}^{r}}{t}.
	\end{align*}
	Therefore, similar to the proof of  Proposition \ref{prop_first_weak_solution}, we see from the inequality above that \begin{align*}
		0
		&\leq \frac{\ph_\lambda(\vartheta(t)(v^*+th))-\ph_\lambda(\vartheta(t)v^*)}{t} \to -\infty \quad \text{as }t\to 0^+,
	\end{align*}
	which is again a contradiction. We  conclude that $v^*(x)>0$ for a.\,a.\,$x\in \Omega$.
	
	The rest of the proof can be done similarly as the proof of Proposition \ref{prop_first_weak_solution}. Precisely, \eqref{def1} and \eqref{def2} can be proven  in the same way by applying again  \eqref{prop_200} and the inequality $\omega_{v^*}(1) \geq \omega_{v^*}(\vartheta(t))$ together with $v^*>0$. Finally, the last part of Proposition \ref{prop_first_weak_solution} is the same replacing $u^*$ by $v^*$. From Proposition \ref{prop_minimize_negative} we know that $\ph_\lambda(v^*)>0$. This finishes the proof. 
\end{proof}

The proof of Theorem \ref{main_result} follows now from Propositions \ref{prop_first_weak_solution} and \ref{prop_seond_weak_solution}.

\section*{Acknowledgment}
The authors wish to thank Professor R.\,L.\,Alves for pointing out mistakes in the proofs of Propositions \ref{prop_energy_estimate} and \ref{prop_first_weak_solution} in the first version of the manuscript.

W. Liu was supported by the NNSF of China (Grant No. 11961030). 
G. Dai is supported by the NNSF of China (Grant No. 11871129), 
the Fundamental Research Funds for the Central Universities (Grant No. DUT17LK05), 
and Xinghai Youqing funds from Dalian University of Technology.


\end{document}